\theoremstyle{plain}
\newtheorem{lemma}{Lemma}[section]
\newtheorem{theorem}[lemma]{Theorem}
\theoremstyle{definition}
\newtheorem{example}[lemma]{Example}
\DeclareMathOperator*{\argmin}{arg\,min}
\theoremstyle{definition}
\theoremstyle{remark}
\newtheorem{remark}[lemma]{Remark}
\newcommand{\devnull}[1]{}
\numberwithin{equation}{section}
\title{Limit theorems for quadratic forms and related quantities of discretely sampled continuous-time moving averages}
\author{Mikkel Slot Nielsen and Jan Pedersen}
\date{May 8, 2018}
\date{\small Department of Mathematics, Aarhus University,\\
 \{mikkel, jan\}@math.au.dk}
\begin{document}
\maketitle

\begin{abstract}
The limiting behavior of Toeplitz type quadratic forms of stationary processes has received much attention through decades, particularly due to its importance in statistical estimation of the spectrum. In the present paper we study such quantities in the case where the stationary process is a discretely sampled continuous-time moving average driven by a L\'{e}vy process. We obtain sufficient conditions, in terms of the kernel of the moving and the coefficients of the quadratic form, ensuring that the centered and adequately normalized version of the quadratic form converges weakly to a Gaussian limit.
\\ \\
\footnotesize \textit{AMS 2010 subject classifications:} 60F05; 60G10; 60G51; 60H05
\\
\textit{Keywords:} Limit theorems; L\'{e}vy processes; Moving averages; Quadratic forms
\end{abstract}


\section{Introduction}\label{intro}
Let $(Y_t)_{t\in \mathbb{Z}}$ be a stationary sequence of random variables with $\mathbb{E}Y_0 = 0$ and $\mathbb{E}Y_0^2 <\infty$, and suppose that $(Y_t)_{t\in \mathbb{Z}}$ is characterized by a parameter $\theta$ which we, for simplicity, assume to be an element of $\mathbb{R}$. If one wants to infer the true value $\theta_0$ of $\theta$ from a sample $Y(n)=(Y_1,\dots, Y_n)^T$, a typical estimator is obtained as
\begin{align*}
\hat{\theta}_n = \argmin_{\theta} \ell_n (\theta),
\end{align*}
where $\ell_n = \ell_n (\cdot ; Y(n))$ is a suitable objective function. On an informal level, the usual strategy for showing asymptotic normality of the estimator $\hat{\theta}_n$ is to use a Taylor series expansion to write
\begin{align*}
\frac{\ell_n'(\theta_0)}{\sqrt{n}} = - \frac{\ell_n'' (\theta_n^*)}{n} \sqrt{n}(\hat{\theta}_n-\theta_0),
\end{align*}
and then show that $\ell_n'' (\theta_n^*)/n$ converges in probability to a non-zero constant and $\ell_n'(\theta_0)/\sqrt{n}$ converges in distribution to a centered Gaussian random variable. Here $\ell_n'$ and $\ell_n''$ refer to the first and second order derivative of $\ell_n$ with respect to $\theta$, respectively, and $\theta_n^*$ is a point in the interval formed by $\hat{\theta}_n$ and $\theta_0$. While the convergence of $\ell_n'' (\theta_n^*)/n$ usually can be shown by an ergodic theorem under the assumptions of consistency of $\hat{\theta}_n$ and ergodicity of $(Y_t)_{t\in \mathbb{Z}}$, showing the desired convergence of $\ell_n'(\theta_0)/\sqrt{n}$ may be much more challenging. In particular, if the quantity $\ell_n' (\theta_0)$ corresponds to a rather complicated function of $Y(n)$, one often needs to impose restrictive assumptions on the dependence structure of $(Y_t)_{t\in \mathbb{Z}}$, e.g., rapidly decaying mixing coefficients. In addition to the concern that such type mixing conditions do not hold in the presence of long memory, they may generally be difficult to verify.

When $\ell_n$ has an explicit form, one can sometimes exploit the particular structure to prove asymptotic normality of $\ell_n'(\theta_0)/\sqrt{n}$. To be concrete, let $\gamma_Y (\cdot ;\theta)$ denote the autocovariance function of $(Y_t)_{t\in \mathbb{Z}}$ and $\Sigma_n (\theta) = [\gamma_Y(j-k;\theta)]_{j,k=1,\dots, n}$ the covariance matrix of $Y(n)$. A very popular choice of $\ell_n$ is the (scaled) negative Gaussian log-likelihood, 
\begin{align}\label{trueLikeli}
\ell_n(\theta) = \log \det (\Sigma_n (\theta)) + Y(n)^T \Sigma_n (\theta)^{-1}Y(n).
\end{align}
In order to avoid the cumbersome and, in the presence of long memory, unstable computations related to the inversion of $\Sigma_n(\theta)$, one sometimes instead uses Whittle's approximation of (\ref{trueLikeli}), which is given by
\begin{align}\label{whittleLikeli}
\begin{aligned}
\ell_{n,\textit{Whittle}}(\theta) &= \frac{n}{2\pi}\int_{-\pi}^\pi \log (2\pi f_Y(y;\theta))\, dy + Y(n)^T A_n(\theta)Y(n)\\
&= \frac{n}{2\pi} \int_{-\pi}^\pi \Bigr[\log (2\pi f_Y(y;\theta))\, dy + \frac{I_Y(y)}{2\pi f_Y(y;\theta)}\Bigr]\, dy,
\end{aligned}
\end{align}
where $f_Y(\cdot;\theta)$ is the spectral density of $Y$, $I_Y$ is the periodogram of $Y$ and 
\begin{align*}
A_n(\theta) = \Bigr[\frac{1}{(2\pi)^2}\int_{-\pi}^\pi e^{i(j-k)y}\frac{1}{f_Y(y;\theta)}\, dy \Bigr]_{j,k=1,\dots, n}.
\end{align*}
(For details about the relation between the Gaussian likelihood and Whittle's approximation, and for some justification for their use, see \cite{beran2016long,giraitis2012large,pipiras2017long}.) An important feature of both (\ref{trueLikeli}) and (\ref{whittleLikeli}) is that, under suitable assumptions on $\gamma_Y(\cdot ;\theta)$ and $f_Y(\cdot; \theta)$, the quantities $\ell_n'( \theta_0)/\sqrt{n}$ and $\ell_{n,\textit{Whittle}}'(\theta_0)/\sqrt{n}$ are of the form $(Q_n-\mathbb{E}Q_n)/\sqrt{n}$, where
\begin{align}\label{QnQuantity}
Q_n = \sum_{t,s=1}^n b(t-s)Y_t Y_s
\end{align}
and $b:\mathbb{Z}\to \mathbb{R}$ is an even function. Consequently, proving asymptotic normality of $\ell_n'(\theta_0)/\sqrt{n}$ and $\ell_{n,\textit{Whittle}}'(\theta_0)/\sqrt{n}$ reduces to determining for which processes $(Y_t)_{t\in \mathbb{Z}}$ and functions $b$, $(Q_n-\mathbb{E}Q_n)/\sqrt{n}$ converges in distribution to a centered Gaussian random variable. In the case where $(Y_t)_{t\in \mathbb{Z}}$ is Gaussian and $b(t) = \int_{-\pi}^\pi e^{ity}\, \hat{b}(y)\, dy$, the papers \cite{avram1988bilinear,fox1987central} give conditions on $\hat{b}$ and the spectral density of $(Y_t)_{t\in \mathbb{Z}}$ ensuring that such weak convergence holds. Moreover, \citet{fox1985noncentral} proved non-central limit theorems for (an adequately normalized version of) (\ref{QnQuantity}) in case $Y_t = H(X_t)$ where $H$ is a Hermite polynomial and $(X_t)_{t\in \mathbb{Z}}$ is a normalized Gaussian sequence with a slowly decaying autocovariance function. In particular, they showed that the limit can be both Gaussian and non-Gaussian depending on the decay-rate of the autocovariances. Later, \citet{giraitis1990central} left the Gaussian framework and considered instead general linear processes of the form
\begin{align}\label{DMA}
Y_t =\sum_{s \in \mathbb{Z}} \varphi_{t-s}\varepsilon_s,\quad t \in \mathbb{Z},
\end{align}
where $(\varepsilon_t)_{t\in \mathbb{Z}}$ is an i.i.d. sequence with $\mathbb{E}\varepsilon_0 = 0$ and $\mathbb{E}\varepsilon_0^4< \infty$, and $\sum_{t \in \mathbb{Z}}\varphi^2_t<\infty$. They provided sufficient conditions (in terms of $b$ and the autocovariance function of $(Y_t)_{t\in \mathbb{Z}}$) ensuring that $(Q_n-\mathbb{E}Q_n)/\sqrt{n}$ tends to a Gaussian limit. Many interesting processes are given by (\ref{DMA}), the short-memory ARMA processes and the long-memory ARFIMA processes being the main examples, and their properties have been studied extensively. The literature on these processes is overwhelming, and the following references form only a small sample: \cite{brockDavis,long_range,giraitis2012large,hamilton1994time}.

The continuous-time analogue of (\ref{DMA}) is the moving average process $(X_t)_{t\in \mathbb{R}}$ given by
\begin{align}\label{CMA}
X_t = \int_\mathbb{R}\varphi (t-s)\, dL_s,\quad t \in \mathbb{R},
\end{align}
where $(L_t)_{t\in \mathbb{R}}$ is a two-sided L\'{e}vy process with $\mathbb{E}L_1=0$ and $\mathbb{E}L_1^4<\infty$, and where $\varphi:\mathbb{R}\to \mathbb{R}$ is a function in $L^2$. Among popular and well-studied continuous-time moving averages are the CARMA processes, particularly the Ornstein-Uhlenbeck process, and solutions to linear stochastic delay differential equations (see \cite{brockwell2001levy,brockwell2009existence,gushchin2000stationary,kuchler2013statistical} for more on these processes). \citet{bai2016limit} considered a continuous-time version of (\ref{QnQuantity}), where sums are replaced by integrals and $(Y_t)_{t\in \mathbb{Z}}$ by $(X_t)_{t\in \mathbb{R}}$ defined in (\ref{CMA}), and they obtained conditions on $b$ and $\varphi$ ensuring both a Gaussian and non-Gaussian limit for (a suitable normalized version of) the quadratic form.

Our main contribution is Theorem~\ref{theoremIntro}, which gives sufficient conditions on $\varphi$ and $b$ ensuring that $(Q_n-\mathbb{E}Q_n)/\sqrt{n}$ converges in distribution to a centered Gaussian random variable when $Y_t = X_{t\Delta}$, $t\in \mathbb{Z}$, for some fixed $\Delta>0$. In the formulation we denote by $\kappa_4$ the fourth cumulant of $L_1$ and by $\gamma_X$ the autocovariance function of $(X_t)_{t\in \mathbb{R}}$ (see the formula in (\ref{autCovQform})).
\begin{theorem}\label{theoremIntro}
Let $(X_t)_{t\in \mathbb{R}}$ be given by (\ref{CMA}) and define $Q_n$ as in (\ref{QnQuantity}) with $Y_t = X_{t\Delta}$ for some $\Delta >0$. Suppose that one of the following statements hold:
\begin{enumerate}[(i)]
\item\label{introAs1} There exist $\alpha, \beta \in [1,2]$ with $2/\alpha + 1/\beta \geq 5/2$, such that $\sum_{t\in \mathbb{Z}}\vert b(t)\vert^\beta<\infty$ and
\begin{align*}
\Bigr(t  \mapsto \sum_{s\in \mathbb{Z}} \vert \varphi(t+s\Delta)\vert^\kappa\Bigr)\in L^{4/\kappa}([0,\Delta])\quad \text{for } \kappa = \alpha,2.
\end{align*}

\item\label{introAs2} The function $\varphi$ belongs to $L^4$ and there exist $\alpha,\beta >0$ with $\alpha + \beta <1/2$, such that
\begin{align*}
\sup_{t\in \mathbb{R}} \vert t \vert^{1- \alpha/2}\vert \varphi (t)\vert< \infty \quad \text{and}
\quad \sup_{t\in \mathbb{Z}} \vert t\vert^{1-\beta} \vert b (t)\vert <\infty.
\end{align*}
\end{enumerate}
Then, as $n\to \infty$, $(Q_n - \mathbb{E}Q_n)/\sqrt{n}$ tends to a Gaussian random variable with mean zero and variance
\begin{align*}
\eta^2 =&\, \kappa_4\int_0^\Delta \Bigr(\sum_{s\in \mathbb{Z}} \varphi (t+s\Delta) \sum_{u \in \mathbb{Z}} b(u)\varphi (t+(s+u)\Delta) \Bigr)^2\, dt \\
&+2\sum_{s\in \mathbb{Z}}\Bigr(\sum_{u\in \mathbb{Z}} b(u)\gamma_X((s+u)\Delta) \Bigr)^2.
\end{align*}
\end{theorem}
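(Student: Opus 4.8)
\emph{Setup and variance.} First note that $Q_n$ is automatically well defined and square-integrable: it is a finite linear combination of the products $Y_tY_s=X_{t\Delta}X_{s\Delta}$, and $X_0\in L^4$ because $\mathbb{E}L_1^4<\infty$ and $\varphi\in L^2\cap L^4$, the latter being a consequence of either (i) or (ii). Inserting $Y_t=\int_{\mathbb{R}}\varphi(t\Delta-u)\,dL_u$ into (\ref{QnQuantity}) and using that $b$ is even, I would write $Q_n$ as a double integral over $\mathbb{R}^2$ of the symmetric kernel $g_n(u,v)=\sum_{t,s=1}^n b(t-s)\varphi(t\Delta-u)\varphi(s\Delta-v)$; its off-diagonal part is a centered second-order object, while the diagonal $u=v$ splits into the deterministic $\mathbb{E}Q_n$ and a centered term driven by the squared jumps of $L$. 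A direct second-moment computation then gives $\tfrac1n\operatorname{Var}(Q_n)\to\eta^2$: the off-diagonal part has variance $2\sum_{t,s,t',s'=1}^n b(t-s)b(t'-s')\gamma_X((t-t')\Delta)\gamma_X((s-s')\Delta)$, which after the substitution $u=t+m\Delta$ ($t\in[0,\Delta)$, $m\in\mathbb{Z}$) and division by $n$ tends to $2\sum_{s\in\mathbb{Z}}(\sum_{u\in\mathbb{Z}}b(u)\gamma_X((s+u)\Delta))^2$; the squared-jump part has variance $\kappa_4\int_{\mathbb{R}}g_n(u,u)^2\,du$, which by the same substitution and dominated convergence, divided by $n$, tends to the $\kappa_4$-term of $\eta^2$; and the two parts are uncorrelated. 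This is also where the fourth cumulant of $L_1$ enters the limit.

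\emph{Asymptotic normality via cumulants.} Since $Q_n$ need not have moments beyond the second, I would establish the Gaussian limit by the method of cumulants after a truncation. Let $L^{(K)}$ be $L$ with the jumps of modulus $>K$ removed and compensated so that $\mathbb{E}L^{(K)}_1=0$; then $L^{(K)}_1$ has moments of all orders, and so does the corresponding quadratic form $Q_n^{(K)}$. The crux is to prove that for every integer $p\ge3$,
\begin{align*}
n^{-p/2}\operatorname{cum}_p\bigl(Q_n^{(K)}-\mathbb{E}Q_n^{(K)}\bigr)\longrightarrow 0\qquad(n\to\infty).
\end{align*}
By the diagram (partition) formula, $\operatorname{cum}_p(Q_n^{(K)}-\mathbb{E}Q_n^{(K)})$ is a finite sum of ``circular'' terms, each a product of cumulants $\kappa_j(L^{(K)}_1)$ (with $j\le2p$), times iterated integrals of products of shifted copies of $\varphi$ --- which, after the substitution $u=t+m\Delta$, turn into the aliased kernels $t\mapsto\sum_s\varphi(t+s\Delta)$ --- times products of $b$ running around cycles; schematically, these are traces of alternating products of the $n\times n$ Toeplitz matrices $[b(j-k)]$ and $[\gamma_X((j-k)\Delta)]$, decorated by higher-cumulant insertions, and each of them must be shown to be $o(n^{p/2})$.

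\emph{Role of the hypotheses; the main obstacle.} These $o(n^{p/2})$ bounds are the technical heart of the argument and the step I expect to be hardest. They should follow from generalized H\"older and Hausdorff--Young inequalities applied to $b$ and to the aliased kernels: one has to track which factor $\varphi(\cdot+s\Delta)$ and which factor $b$ sits on which edge of a given diagram and then choose the interpolation exponents so that the total power of $n$ produced is strictly less than $p/2$. This is exactly where the arithmetic conditions are used --- in regime (i) by combining $\bigl(t\mapsto\sum_s|\varphi(t+s\Delta)|^\kappa\bigr)\in L^{4/\kappa}([0,\Delta])$ for $\kappa=\alpha,2$ with $\sum_t|b(t)|^\beta<\infty$ through $2/\alpha+1/\beta\ge5/2$, and in regime (ii) by exploiting the decay rates $|\varphi(t)|\lesssim|t|^{-(1-\alpha/2)}$ and $|b(t)|\lesssim|t|^{-(1-\beta)}$ with $\alpha+\beta<1/2$, together with $\varphi\in L^4$, via estimates on convolutions of power functions. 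The two regimes call for somewhat different book-keeping.

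\emph{Conclusion.} Granting these bounds, $\tfrac1n\operatorname{cum}_2(Q_n^{(K)})\to\eta_K^2$ (the analogue of $\eta^2$ built from $L^{(K)}$) and, by dominated convergence using $\mathbb{E}L_1^4<\infty$, $\eta_K^2\to\eta^2$ as $K\to\infty$; hence $(Q_n^{(K)}-\mathbb{E}Q_n^{(K)})/\sqrt n\Rightarrow N(0,\eta_K^2)$ for each fixed $K$ (convergence of all cumulants plus tightness from boundedness of $\tfrac1n\operatorname{Var}(Q_n^{(K)})$). One finally checks that $\limsup_n\tfrac1n\operatorname{Var}(Q_n-Q_n^{(K)})\to0$ as $K\to\infty$ --- again via the kernel estimates of the previous step --- and concludes by the standard approximation lemma that $(Q_n-\mathbb{E}Q_n)/\sqrt n\Rightarrow N(0,\eta^2)$. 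The Fubini-type manipulations and the dominated-convergence passages are most transparently justified by first proving the result for compactly supported $b$ and $\varphi$ and then removing the restriction by an $L^2$-approximation of $Q_n$.
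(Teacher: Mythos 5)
Your plan takes a genuinely different route from the paper. The paper never expands higher-order cumulants: it approximates $Q_n$ by $S_n=\sum_{t=1}^n X_{t\Delta}\int_{\mathbb{R}}(b\star\varphi)(t\Delta-u)\,dL_u$ and shows $\mathrm{Var}(Q_n-S_n)/n\to 0$ (an idea credited to Giraitis--Surgailis), then proves a CLT for $S_n$ by truncating the \emph{kernels} $\varphi_i$ so that $(X^{1,m}_{t\Delta}X^{2,m}_{t\Delta})_t$ is $k(m)$-dependent and invoking the classical CLT for $m$-dependent sequences, with all error control done at the level of second moments of products (hence only $\mathbb{E}L_1^4<\infty$ is ever used, and no truncation of the jumps of $L$ is needed). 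The hypotheses of the theorem are then verified through a Young-type convolution inequality. Your route --- jump truncation plus the diagram/cumulant method --- is the strategy of Fox--Taqqu, Giraitis--Surgailis and Bai--Ginovyan--Taqqu, and is viable in principle; its advantage would be avoiding the $m$-dependent machinery, at the price of having to control cumulants of every order.

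That price is exactly where your proposal has a genuine gap. The bounds $n^{-p/2}\,\mathrm{cum}_p(Q_n^{(K)}-\mathbb{E}Q_n^{(K)})\to 0$ for all $p\geq 3$ are not an implementation detail: they \emph{are} the theorem in your approach, and you explicitly defer them (``the step I expect to be hardest''). Two concrete difficulties are left unaddressed. First, the diagram expansion for a L\'{e}vy-driven quadratic form produces, besides the Gaussian-type terms (traces of alternating products of the Toeplitz matrices built from $b$ and $\gamma_X(\cdot\Delta)$), a whole family of terms carrying higher cumulants $\kappa_j(L_1^{(K)})$ attached to ``diagonal'' multi-fold products $\int\prod_j\varphi(t_j\Delta+v)\,dv$ of aliased kernels; the stated hypotheses --- $(t\mapsto\sum_s|\varphi(t+s\Delta)|^{\kappa})\in L^{4/\kappa}$ for $\kappa=\alpha,2$ and $b\in\ell^{\beta}$ with $2/\alpha+1/\beta\geq 5/2$ --- are calibrated in the paper for a fourth-moment (variance) computation, and you give no argument that they close the generalized H\"{o}lder estimates needed at \emph{every} order $p$, for both families of diagrams and in both regimes (i) and (ii). Second, the de-truncation step requires $\lim_{K\to\infty}\limsup_n \mathrm{Var}(Q_n-Q_n^{(K)})/n=0$; since $Q_n-Q_n^{(K)}$ involves cross terms between $L^{(K)}$ and the large-jump part, this is again a full fourth-moment computation with the same summability issues, asserted but not carried out. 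Until these estimates are supplied, the proposal is a plausible program rather than a proof.
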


While the statement in (\ref{introAs1}) is more general than the statement in (\ref{introAs2}) of Theorem~\ref{theoremIntro}, the latter provides an easy-to-check condition in terms of the decay of $\varphi$ and $b$ at infinity. Theorem~\ref{theoremIntro} relies on an approximation of $Q_n$ by a quantity of the type
\begin{align}\label{SnQuantity}
S_n = \sum_{t=1}^n X^1_{t\Delta}X^2_{t\Delta},
\end{align}
where $(X^1_t)_{t\in \mathbb{R}}$ and $(X^2_t)_{t\in \mathbb{R}}$ are moving averages of the form (\ref{CMA}), and a limit theorem for $(S_n-\mathbb{E}S_n)/\sqrt{n}$. This idea is borrowed from \cite{giraitis1990central}. Although we can use the same overall idea, $(X_{t\Delta})_{t\in \mathbb{Z}}$ is generally not of the form (\ref{DMA}) and, due to the interplay between the continuous-time specification (\ref{CMA}) and the discrete-time (low-frequency) sampling scheme, quantities such as the spectral density become less tractable. The conditions of Theorem~\ref{theoremIntro} are similar to the rather general results of \cite{bai2016limit}, which concerned the continuous-time version of (\ref{QnQuantity}). A reason that we obtain conditions of the same type as \cite{bai2016limit} is that our proofs, too, rely on (various modifications of) Young's inequality for convolutions. Since the setup of that paper requires a continuum of observations of $(X_t)_{t\in \mathbb{R}}$, those results cannot be applied in our case.

In addition to its purpose as a tool in the proof of Theorem~\ref{theoremIntro}, a limit theorem for $(S_n-\mathbb{E}S_n)/\sqrt{n}$ is of independent interest, e.g., since it is of the same form as the (scaled) sample autocovariance of (\ref{CMA}) and of $\ell_n'(\theta_0)/\sqrt{n}$ when $\ell_n$ is a suitable least squares objective function (see Example~\ref{sampleAutocovariances} and \ref{leastSquares} for details). For this reason, we present our limit theorem for $(S_n- \mathbb{E}S_n)/\sqrt{n}$ here:

\begin{theorem}\label{theoremIntro2} Let $(X_t^1)_{t\in \mathbb{R}}$ and $(X^2_t)_{t\in \mathbb{R}}$ be as in (\ref{CMA}) with corresponding kernels $\varphi_1,\varphi_2 \in L^2$ and define $S_n$ by (\ref{SnQuantity}). Suppose that one of the following statements holds:
\begin{enumerate}[(i)]
\item\label{introAs12} There exist $\alpha_1,\alpha_2\in [1,2]$ with $1/\alpha_1 + 1/\alpha_2 \geq 3/2$, such that
\begin{align*}
\Bigr(t\mapsto \sum_{s\in \mathbb{Z}}\big(\vert \varphi_i(t+s\Delta)\vert^{\alpha_i} + \varphi_i (t+s\Delta)^2 \big) \Bigr)\in L^2([0,\Delta]),\quad i =1,2.
\end{align*}
\item\label{introAs22} The functions $\varphi_1$ and $\varphi_2$ belong to $L^4$ and there exist $\alpha_1,\alpha_2 \in (1/2,1)$ with $\alpha_1+\alpha_2> 3/2$, such that
\begin{align*}
\sup_{t\in \mathbb{R}}\vert t\vert^{\alpha_i}\vert \varphi_i (t)\vert <\infty, \quad i =1,2.
\end{align*}
\end{enumerate}
Then, as $n\to \infty$, $(S_n-\mathbb{E}S_n)/\sqrt{n}$ tends to a Gaussian random variable with mean zero and variance
\begin{align*}
\eta^2 =&\, \kappa_4\int_0^\Delta \Bigr(\sum_{s\in \mathbb{Z}}\varphi_1(t+s\Delta)\varphi_2(t+s\Delta) \Bigr)^2\, dt + \mathbb{E}\big[L_1^2\big]^2\sum_{s\in \mathbb{Z}}\Bigr(\int_\mathbb{R}\varphi_1(t) \varphi_1(t+s\Delta)\, dt\\
&\cdot\int_\mathbb{R}\varphi_2(t) \varphi_2(t+s\Delta)\, dt +\int_\mathbb{R}\varphi_1(t) \varphi_2(t+s\Delta)\, dt\int_\mathbb{R}\varphi_2(t) \varphi_1(t+s\Delta)\, dt\Bigr).
\end{align*}
\end{theorem}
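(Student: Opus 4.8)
\emph{Strategy.} The plan is to identify the limiting variance by a direct second-moment computation, and then to prove asymptotic normality by truncating the kernels to compact support -- which turns the summands into an $m$-dependent stationary sequence -- while controlling the truncation error at the level of variances. I would first record two ingredients. (a) A covariance identity for products of stochastic integrals: since $X^i_t=\int_{\mathbb R}\varphi_i(t-s)\,dL_s$ is a first-order integral, the classical formula for the fourth joint moment of $\int f_1\,dL,\dots,\int f_4\,dL$ (where, because $\mathbb EL_1=0$, only the three pairings and the single four-block contribute) gives, for $f,g,h,k\in L^2(\mathbb R)\cap L^4(\mathbb R)$,
\begin{align*}
\mathrm{Cov}\Bigl(\int f\,dL\int g\,dL,\ \int h\,dL\int k\,dL\Bigr)=\mathbb E[L_1^2]^2\bigl(\langle f,h\rangle\langle g,k\rangle+\langle f,k\rangle\langle g,h\rangle\bigr)+\kappa_4\int_{\mathbb R}fghk,
\end{align*}
so that, writing $\gamma_{ij}(h)=\mathbb E[L_1^2]\int_{\mathbb R}\varphi_i(u)\varphi_j(u+h)\,du$, one has $\mathrm{Cov}(X^1_{s\Delta}X^2_{s\Delta},X^1_{t\Delta}X^2_{t\Delta})=\gamma_{11}((t-s)\Delta)\gamma_{22}((t-s)\Delta)+\gamma_{12}((t-s)\Delta)\gamma_{21}((t-s)\Delta)+\kappa_4\int_{\mathbb R}\varphi_1(s\Delta-v)\varphi_2(s\Delta-v)\varphi_1(t\Delta-v)\varphi_2(t\Delta-v)\,dv$. (Both hypotheses force $\varphi_i\in L^4$, so $X^i_0\in L^4$ and all these quantities are finite.) (b) A set of convolution estimates proved via Young's inequality, H\"older's inequality and the periodization identity $\int_{\mathbb R}g=\int_0^\Delta\sum_{s\in\mathbb Z}g(t+s\Delta)\,dt$, to the effect that under either (i) or (ii) the lag sequences $s\mapsto\gamma_{ij}(s\Delta)\gamma_{kl}(s\Delta)$ are absolutely summable and $t\mapsto\sum_{s\in\mathbb Z}|\varphi_1\varphi_2|(t+s\Delta)\in L^2([0,\Delta])$, and -- crucially -- that all the relevant bounds are ``multilinear'' in the kernels, hence small once one kernel is replaced by a tail $\varphi_i\mathbf{1}_{\{|\cdot|>K\}}$. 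The exponent conditions $1/\alpha_1+1/\alpha_2\ge 3/2$ and $\alpha_1+\alpha_2>3/2$ are exactly what makes these sums converge; e.g. under (ii) one gets $(\varphi_i\star\widetilde\varphi_i)(t)\lesssim|t|^{-(2\alpha_i-1)}$ and hence $\gamma_{11}(s\Delta)\gamma_{22}(s\Delta)\lesssim|s|^{-2(\alpha_1+\alpha_2)+2}$.

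\emph{Identification of $\eta^2$.} Summing the covariance identity over $s,t\in\{1,\dots,n\}$ and dividing by $n$, the three parts of $\tfrac1n\mathrm{Var}(S_n)$ become Ces\`aro averages of even, absolutely summable lag sequences. For the $\kappa_4$-part one changes variables and applies the periodization identity to see $\tfrac1n\sum_{s,t=1}^n\int_{\mathbb R}\varphi_1(s\Delta-v)\varphi_2(s\Delta-v)\varphi_1(t\Delta-v)\varphi_2(t\Delta-v)\,dv\to\kappa_4\int_0^\Delta\bigl(\sum_{s\in\mathbb Z}\varphi_1(t+s\Delta)\varphi_2(t+s\Delta)\bigr)^2\,dt$; for the two remaining parts one uses dominated convergence in the lag variable (justified by the summability from ingredient (b)) to obtain $\mathbb E[L_1^2]^2\sum_{s\in\mathbb Z}\bigl(\int\varphi_1\varphi_1(\cdot+s\Delta)\int\varphi_2\varphi_2(\cdot+s\Delta)+\int\varphi_1\varphi_2(\cdot+s\Delta)\int\varphi_2\varphi_1(\cdot+s\Delta)\bigr)$. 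Altogether $\tfrac1n\mathrm{Var}(S_n)\to\eta^2$, and the same computation applies verbatim to the truncated kernels introduced next.

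\emph{The limit theorem.} For $K>0$ put $\varphi_i^K=\varphi_i\mathbf{1}_{[-K,K]}$, $X_t^{i,K}=\int_{\mathbb R}\varphi_i^K(t-s)\,dL_s$ and $S_n^K=\sum_{t=1}^nX^{1,K}_{t\Delta}X^{2,K}_{t\Delta}$. Since $X^{i,K}_{t\Delta}$ depends only on the increments of $L$ over $[t\Delta-K,t\Delta+K]$, the sequence $(X^{1,K}_{t\Delta}X^{2,K}_{t\Delta})_{t\in\mathbb Z}$ is strictly stationary (by stationarity of the increments of $L$) and $m_K$-dependent with $m_K=\lceil 2K/\Delta\rceil$, and it lies in $L^2$ because $\varphi_i^K\in L^2\cap L^4$ and $\mathbb EL_1^4<\infty$. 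Hence, by the classical central limit theorem for stationary $m$-dependent sequences, $(S_n^K-\mathbb ES_n^K)/\sqrt n\xrightarrow{d}N(0,\eta_K^2)$ with $\eta_K^2=\lim_n\tfrac1n\mathrm{Var}(S_n^K)$, which by the previous paragraph is the $\eta^2$-expression for the pair $(\varphi_1^K,\varphi_2^K)$. As $K\to\infty$ one has $\varphi_i^K\to\varphi_i$ in $L^2$ and in $L^4$, so, together with the uniform summable bounds of ingredient (b), a term-by-term passage to the limit gives $\eta_K^2\to\eta^2$. Finally, from $S_n-S_n^K=\sum_{t=1}^n\bigl(X^{1,>K}_{t\Delta}X^2_{t\Delta}+X^{1,K}_{t\Delta}X^{2,>K}_{t\Delta}\bigr)$ with $\varphi_i^{>K}=\varphi_i\mathbf{1}_{\{|\cdot|>K\}}$, applying the covariance identity to the mixed pairs $(\varphi_1^{>K},\varphi_2)$ and $(\varphi_1^K,\varphi_2^{>K})$ and invoking the multilinearity and smallness of the Young-type bounds yields
\begin{align*}
\lim_{K\to\infty}\ \limsup_{n\to\infty}\ \tfrac1n\,\mathrm{Var}(S_n-S_n^K)=0.
\end{align*}
A standard converging-together argument (Chebyshev plus the triangular-array convergence lemma) then upgrades the convergence of $(S_n^K-\mathbb ES_n^K)/\sqrt n$ to $(S_n-\mathbb ES_n)/\sqrt n\xrightarrow{d}N(0,\eta^2)$.

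\emph{Main obstacle.} I expect the $m$-dependent central limit theorem and the converging-together step to be routine; the real work is the family of convolution estimates in ingredient (b), and above all the uniform-in-$n$ bound $\limsup_n\tfrac1n\mathrm{Var}(S_n-S_n^K)\to0$. One must control averages such as $\tfrac1n\sum_{s,t=1}^n\bigl|\mathrm{Cov}(X^{1,>K}_{s\Delta}X^2_{s\Delta},X^{1,>K}_{t\Delta}X^2_{t\Delta})\bigr|$ by products of kernel norms that are simultaneously finite under (i)/(ii) and tend to $0$ with the tail kernel, which forces a careful interplay between Young's inequality on $L^p(\mathbb R)$, the discrete sampling at the points $s\Delta$, H\"older splitting of the products inside the sums, and the periodization identity -- and it is precisely the matching of the H\"older/Young exponents that produces the arithmetic conditions $1/\alpha_1+1/\alpha_2\ge3/2$ and $\alpha_1+\alpha_2>3/2$.
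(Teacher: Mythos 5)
Your proposal follows essentially the same route as the paper: the paper deduces this theorem from a more general statement (Theorem~\ref{Qform}) whose proof is exactly your truncation-to-$m$-dependence scheme (classical $m$-dependent CLT, convergence of the truncated variances to $\eta^2$, the bound $\lim_{m}\limsup_n n^{-1}\mathrm{Var}(S_n-S_n^m)=0$, and a converging-together lemma), with your covariance identity appearing as Lemma~\ref{fourthMoment} and your ``ingredient (b)'' carried out as Lemma~\ref{youngConvolution}, a discrete-sampling version of Young's convolution inequality whose exponent bookkeeping produces precisely the conditions $1/\alpha_1+1/\alpha_2\geq 3/2$ and $\alpha_1+\alpha_2>3/2$. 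Two details differ, both harmless here: the paper truncates the kernels in value as well as in support, $\varphi_i^m=[(-m)\vee\varphi_i\wedge m]\mathds{1}_{[-m,m]}$, because Theorem~\ref{Qform} does not assume $\varphi_i\in L^4$, whereas your support-only truncation is legitimate for this theorem since, as you correctly note, both hypotheses force $\varphi_i\in L^4$; and the paper obtains the $L^2(\mathbb{P})$-convergence of $X_0^{1,m}X_0^{2,m}$ to $X_0^{1}X_0^{2}$ via the Hu--Meyer bound for Stratonovich double integrals before applying dominated convergence to the lag sums, while your direct multilinear bound on the tail covariances achieves the same end under the stronger integrability.
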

As was the case in Theorem~\ref{theoremIntro}, statement~(\ref{introAs12}) is more general than statement~(\ref{introAs22}) of Theorem~\ref{theoremIntro2}, but the latter may be convenient as it gives conditions on the decay rate of $\varphi_1$ and $\varphi_2$ at infinity. In relation to Theorem~\ref{theoremIntro2}, it should be mentioned that limit theorems for the sample autocovariances of moving average processes (\ref{CMA}) have been studied in \cite{brandes2018sample,cohen2013central,spangenberg2015limit}.

The paper is organized as follows: Section~\ref{prel} recalls the most relevant concepts in relation to L\'{e}vy processes and the corresponding integration theory. Section~\ref{results} presents the results used to establish Theorem~\ref{theoremIntro} and \ref{theoremIntro2}. In particular, we show that central limit theorems for $Q_n$ and $S_n$ hold under weaker conditions than those given above, and then deduce Theorem~\ref{theoremIntro} and \ref{theoremIntro2} as special cases. Moreover, Section~\ref{results} provides examples demonstrating that the imposed conditions on $\varphi$ (or $\varphi_1$ and $\varphi_2$) are satisfied for CARMA processes, solutions to stochastic delay equations and certain fractional (L\'{e}vy) noise processes. Finally, Section~\ref{proofs} contains proofs of all the statements of the paper together with a few supporting results.

\section{Preliminaries}\label{prel}
In this section we introduce some notation that will be used repeatedly and we recall a few concepts related to L\'{e}vy processes and integration of deterministic functions with respect to them. For a detailed exposition of L\'{e}vy processes and the corresponding integration theory, see \cite{rosSpec,Sato}.

For a given measurable function $f:\mathbb{R}\to \mathbb{R}$ and $p\geq 1$ we write $f\in L^p$ if $\vert f\vert^p$ is integrable with respect to the Lebesgue measure and $f\in L^\infty$ if $f$ is bounded almost everywhere. For a given function $a:\mathbb{Z}\to \mathbb{R}$ (or sequence $(a(t))_{t\in \mathbb{Z}}$) we write $a\in \ell^p$ if $\lVert a\rVert_{\ell^p} := \big(\sum_{t\in \mathbb{Z}}\vert a(t)\vert^p\big)^{1/p}<\infty$ and $a \in \ell^\infty$ if $\lVert a\rVert_{\ell^\infty}:= \sup_{t\in \mathbb{Z}}\vert a(t)\vert < \infty$.

A stochastic process $(L_t)_{t\geq 0}$, $L_0 = 0$, is called a one-sided L\'{e}vy process if it is càdlàg and has stationary and independent increments. The distribution of $(L_t)_{t\geq 0}$ is characterized by $L_1$ as a consequence of the relation $\log \mathbb{E} \exp\{iyL_t\} = t \log \mathbb{E}\exp\{iyL_1\}$. By the L\'{e}vy-Khintchine representation it holds that
\begin{align*}
\log \mathbb{E}e^{iyL_1} = iy\gamma   - \frac{1}{2}\rho^2 y^2 + \int_\mathbb{R}\big(e^{iyx}-1-iyx\mathds{1}_{\vert x \vert \leq 1} \big)\, \nu (dx),\quad y \in \mathbb{R},
\end{align*}
for some $\gamma \in \mathbb{R}$, $\rho^2\geq 0$ and L\'{e}vy measure $\nu$, and hence (the distribution of) $(L_t)_{t\geq 0}$ may be summarized as a triplet $(\gamma,\rho^2,\nu)$. The same holds for a (two-sided) L\'{e}vy process $(L_t)_{t\in \mathbb{R}}$ which is constructed as $L_t = L^1_t\mathds{1}_{t\geq 0} - L^2_{(-t)-}\mathds{1}_{t< 0}$, where $(L^1_t)_{t\geq 0}$ and $(L^2_t)_{t\geq 0}$ are one-sided L\'{e}vy processes which are independent copies.

Let $(L_t)_{t\in \mathbb{R}}$ be a L\'{e}vy process with $\mathbb{E}\vert L_1\vert <\infty$ and $\mathbb{E}L_1 = 0$. Then, for a given measurable function $f:\mathbb{R}\to \mathbb{R}$, the integral $\int_\mathbb{R}f(t)\, dL_t$ is well-defined (as a limit in probability of integrals of simple functions) and belongs to $L^p(\mathbb{P})$, $p\geq 1$, if
\begin{align}\label{intCondition}
\int_\mathbb{R}\int_\mathbb{R} \vert f(t)x\vert^p \wedge (f(t)x)^2\, \nu (dx)\, dt <\infty.
\end{align}
In particular, (\ref{intCondition}) is satisfied if $f \in L^2 \cap L^p$ and $\int_{\vert x \vert >1} \vert x \vert^p\, \nu (dx)<\infty$, the latter condition being equivalent to $\mathbb{E}\vert L_1\vert^p<\infty$. Finally, when (\ref{intCondition}) holds for $p = 2$ we will often make use of the isometry property of the integral map:
\begin{align*}
\mathbb{E}\Bigr[\Bigr(\int_\mathbb{R}f(t)\, dL_t\Bigr)^2\Bigr] = \int_\mathbb{R}f(t)^2\, dt.
\end{align*}

\section{Further results and examples}\label{results}
As in the introduction, it will be assumed throughout that $(L_t)_{t\in \mathbb{R}}$ is a two-sided L\'{e}vy process with $\mathbb{E}L_1 = 0$ and $\mathbb{E}L_1^4<\infty$. Set $\sigma^2 = \mathbb{E}L_1^2$ and $\kappa_4 = \mathbb{E} L_1^4-3\sigma^4$. Moreover, for functions $\varphi,\varphi_1,\varphi_2:\mathbb{R}\to \mathbb{R}$ in $L^2$ define
\begin{align}\label{MAprocesses}
X_t = \int_\mathbb{R} \varphi (t-s)\, dL_s, \quad \text{and}\quad X^i_t = \int_\mathbb{R}\varphi_i(t-s)\, dL_s
\end{align}
for $t \in \mathbb{R}$ and $i=1,2$. We will be interested in the quantities
\begin{align}\label{relevantQuantity}
S_n = \sum_{t=1}^n X^1_{t\Delta}X^2_{t\Delta}\quad \text{and}\quad Q_n = \sum_{t,s = 1}^n b(t-s)X_{t\Delta}X_{s\Delta}
\end{align}
for a given $\Delta>0$ and an even function $b:\mathbb{Z}\to \mathbb{R}$. Our main results, Theorem~\ref{Qform} and \ref{quadraticForms}, provide a central limit theorem for the quantities in (\ref{relevantQuantity}) and are more general than Theorem~\ref{theoremIntro} and \ref{theoremIntro2} which were presented in Section~\ref{intro}. Before the formulations we define the autocovariance function of $(X_t)_{t\in \mathbb{R}}$,
\begin{align}\label{autCovQform}
\gamma_X (h) = \mathbb{E}[X_0X_h] = \sigma^2 \int_\mathbb{R}\varphi (t)\varphi (t+h)\, dt,\quad h \in \mathbb{R},
\end{align}
as well as the autocovariance (crosscovariance) functions of $(X^1_t)_{t\in \mathbb{R}}$ and $(X^2_t)_{t\in \mathbb{R}}$,
\begin{align}\label{autCrossCov}
\gamma_{ij}(h) = \mathbb{E}\big[X^i_0X^j_h\big] = \sigma^2\int_\mathbb{R}\varphi_i(t)\varphi_j(t+h)\, dt,\quad h \in \mathbb{R}.
\end{align}
\begin{theorem}\label{Qform} Suppose that the following conditions hold:
\begin{enumerate}[(i)]
\item\label{firstAs1} $\int_\mathbb{R}\vert \varphi_i(t)\varphi_i(t+\cdot\Delta)\vert\, dt \in \ell^{\alpha_i}$ for $i=1,2$ and $\alpha_1,\alpha_2\in [1,\infty]$ with $1/\alpha_1 + 1/\alpha_2 = 1$.
\item\label{firstAs3} $\int_\mathbb{R}\vert \varphi_1(t)\varphi_2(t+\cdot\Delta)\vert\, dt \in \ell^2$. 
\item\label{firstAs2} $\big( t\mapsto \lVert \varphi_1(t+\cdot\Delta)\varphi_2(t+\cdot\Delta)\rVert_{\ell^1}\big)\in L^2([0,\Delta])$.
\end{enumerate}

Then, as $n\to \infty$, $(S_n-\mathbb{E}S_n)/\sqrt{n}$ tends to a Gaussian random variable with mean zero and variance
\begin{align}\label{SnVariance}
\begin{aligned}
\eta^2 =&\, \kappa_4 \int_0^\Delta \Bigr(\sum_{s\in \mathbb{Z}} \varphi_1 (t+s\Delta) \varphi_2 (t+s\Delta)\Bigr)^2\, dt +\sum_{s\in \mathbb{Z}} \gamma_{11}(s\Delta)\gamma_{22}(s\Delta)\\
&+\sum_{s\in \mathbb{Z}} \gamma_{12}(s\Delta)\gamma_{21}(s\Delta).
\end{aligned}
\end{align}
\end{theorem}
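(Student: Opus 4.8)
The plan is to use the approximation scheme of Giraitis and Surgailis: first reduce to compactly supported kernels, for which $(X^1_{t\Delta}X^2_{t\Delta})_{t\in\mathbb{Z}}$ is a stationary $m$-dependent sequence covered by a classical central limit theorem, and then control the truncation error uniformly in $n$ through variance estimates governed precisely by the hypotheses (\ref{firstAs1})--(\ref{firstAs2}). The basic analytic input is a moment identity for products of L\'evy integrals: for kernels $\psi_1,\psi_2\in L^2\cap L^4$ and $Z^i_t=\int_{\mathbb{R}}\psi_i(t-s)\,dL_s$ (so that $Z^i_0\in L^4$ by (\ref{intCondition}) with $p=4$), writing $\chi=\psi_1\psi_2$ for the pointwise product, $h=(t-s)\Delta$ and $\widetilde\gamma_{ij}(r)=\sigma^2\int_{\mathbb{R}}\psi_i(u)\psi_j(u+r)\,du$, one has
\begin{align*}
\operatorname{Cov}\big(Z^1_{t\Delta}Z^2_{t\Delta},Z^1_{s\Delta}Z^2_{s\Delta}\big)
={}&\kappa_4\int_{\mathbb{R}}\chi(t\Delta-u)\chi(s\Delta-u)\,du\\
&+\widetilde\gamma_{11}(h)\widetilde\gamma_{22}(h)+\widetilde\gamma_{12}(h)\widetilde\gamma_{21}(h).
\end{align*}
This is a standard fourth-moment computation for linear functionals of $L$ using the independence of its increments; the cumulant $\kappa_4$ enters only through the contribution of coinciding integration variables. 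Condition (\ref{firstAs2}) forces $\varphi_1\varphi_2\in L^2$, since $\int_{\mathbb{R}}(\varphi_1\varphi_2)^2=\int_0^\Delta\sum_{s\in\mathbb{Z}}(\varphi_1\varphi_2)(w+s\Delta)^2\,dw\leq\int_0^\Delta\lVert(\varphi_1\varphi_2)(w+\cdot\Delta)\rVert_{\ell^1}^2\,dw$; together with $\varphi_i\in L^2$ this makes $X^1_0X^2_0\in L^2$, so the identity and all quantities below extend by approximation to $\psi_i=\varphi_i$.

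Summing the identity over $t,s=1,\dots,n$, the two Isserlis-type terms contribute $\sum_{t,s=1}^n$ of a function of $t-s$, while the $\kappa_4$-term equals $\kappa_4\int_{\mathbb{R}}\big(\sum_{t=1}^n\chi(t\Delta-u)\big)^2\,du=\kappa_4\sum_{t,s=1}^n c(t-s)$ with $c(k)=\int_{\mathbb{R}}\chi(u)\chi(u+k\Delta)\,du$. Partitioning $\mathbb{R}$ into the cosets $w+\Delta\mathbb{Z}$, $w\in[0,\Delta)$, identifies $\sum_{k\in\mathbb{Z}}c(k)$ with $\int_0^\Delta\big(\sum_{s\in\mathbb{Z}}\chi(w+s\Delta)\big)^2\,dw$ and likewise rewrites $\sum_k\widetilde\gamma_{11}(k\Delta)\widetilde\gamma_{22}(k\Delta)$ and $\sum_k\widetilde\gamma_{12}(k\Delta)\widetilde\gamma_{21}(k\Delta)$. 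For $\psi_i=\varphi_i$ all three series converge absolutely: (\ref{firstAs1}) makes $(\gamma_{11}(k\Delta)\gamma_{22}(k\Delta))_{k\in\mathbb{Z}}$ summable by H\"older with exponents $\alpha_1,\alpha_2$, (\ref{firstAs3}) does the same for the crosscovariance term via $\gamma_{21}(k\Delta)=\gamma_{12}(-k\Delta)$, and (\ref{firstAs2}) gives $c\in\ell^1$ together with finiteness of the $L^2([0,\Delta])$-integral. A Ces\`aro argument then gives $\operatorname{Var}(S_n)/n\to\eta^2$, with $\eta^2$ as in (\ref{SnVariance}). Keeping absolute values throughout and using $\sum_{t,s=1}^n a(t-s)\leq n\sum_k\vert a(k)\vert$, the \emph{same} computation yields a uniform estimate valid for arbitrary $\psi_1,\psi_2\in L^2\cap L^4$: $\limsup_n\operatorname{Var}\big(\sum_{t=1}^n Z^1_{t\Delta}Z^2_{t\Delta}\big)/n$ is bounded by $\vert\kappa_4\vert\int_0^\Delta\lVert\chi(w+\cdot\Delta)\rVert_{\ell^1}^2\,dw+\sum_k\vert\widetilde\gamma_{11}(k\Delta)\widetilde\gamma_{22}(k\Delta)\vert+\sum_k\vert\widetilde\gamma_{12}(k\Delta)\widetilde\gamma_{21}(k\Delta)\vert$, and each of these three terms is in turn controlled — via H\"older and Young's inequality for convolutions — by products of $\ell^{\alpha_i}$- and $\ell^2$-norms of convolutions built from $\vert\psi_1\vert$ and $\vert\psi_2\vert$.

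For the $m$-dependent step, fix $N$ and put $\varphi_i^{(N)}=\varphi_i\mathds{1}_{\{\vert\varphi_i\vert\leq N\}}\mathds{1}_{[-N,N]}$, so that $\varphi_i^{(N)}\in L^1\cap L^\infty\subset L^2\cap L^4$, $\varphi_i^{(N)}\to\varphi_i$ in $L^2$ and almost everywhere, and $\vert\varphi_i^{(N)}\vert\leq\vert\varphi_i\vert$. Setting $X^{i,N}_t=\int_{\mathbb{R}}\varphi_i^{(N)}(t-s)\,dL_s$ and $S_n^N=\sum_{t=1}^n X^{1,N}_{t\Delta}X^{2,N}_{t\Delta}$, the variable $X^{i,N}_{t\Delta}$ depends only on the increments of $L$ over $[t\Delta-N,t\Delta+N]$, hence $(X^{1,N}_{t\Delta}X^{2,N}_{t\Delta})_{t\in\mathbb{Z}}$ is stationary and $m$-dependent with $m=\lceil2N/\Delta\rceil$, and square-integrable because $\varphi_i^{(N)}\in L^4$. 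The central limit theorem for $m$-dependent sequences (Hoeffding--Robbins) then gives that $(S_n^N-\mathbb{E}S_n^N)/\sqrt n$ tends in distribution to $\mathcal N(0,\eta_N^2)$, where $\eta_N^2=\operatorname{Var}(X^{1,N}_0X^{2,N}_0)+2\sum_{k=1}^m\operatorname{Cov}(X^{1,N}_0X^{2,N}_0,X^{1,N}_{k\Delta}X^{2,N}_{k\Delta})$ equals, by the covariance identity, the expression (\ref{SnVariance}) with every $\varphi_i$ replaced by $\varphi_i^{(N)}$; dominated convergence (the dominating functions being furnished by (\ref{firstAs1})--(\ref{firstAs2}) for $\varphi_1,\varphi_2$) yields $\eta_N^2\to\eta^2$.

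It remains to let $N\to\infty$. From $X^1_{t\Delta}X^2_{t\Delta}-X^{1,N}_{t\Delta}X^{2,N}_{t\Delta}=(X^1-X^{1,N})_{t\Delta}X^2_{t\Delta}+X^{1,N}_{t\Delta}(X^2-X^{2,N})_{t\Delta}$, the error $S_n-S_n^N$ is a sum of two quantities of the form $\sum_t Z^1_{t\Delta}Z^2_{t\Delta}$ in which one kernel is $\varphi_i-\varphi_i^{(N)}$ and the other is $\varphi_j$ or $\varphi_j^{(N)}$. The uniform variance estimate bounds $\limsup_n\operatorname{Var}\big((S_n-S_n^N)/\sqrt n\big)$ by convolution norms involving $\varphi_i-\varphi_i^{(N)}$, and each of these tends to $0$ as $N\to\infty$ by dominated convergence — the relevant integrands and series converge pointwise to $0$ and are dominated by the finite quantities in (\ref{firstAs1})--(\ref{firstAs2}), with the $\alpha_1$- and $\alpha_2$-factors split between the two kernels by H\"older. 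A standard convergence-together argument — comparing characteristic functions, using $\vert\mathbb{E}e^{i\xi U}-\mathbb{E}e^{i\xi V}\vert\leq\vert\xi\vert(\mathbb{E}(U-V)^2)^{1/2}$, and letting $N\to\infty$ after $n\to\infty$ — then upgrades the $m$-dependent limit theorem to the assertion that $(S_n-\mathbb{E}S_n)/\sqrt n$ tends in distribution to $\mathcal N(0,\eta^2)$. I expect the main obstacle to be exactly this last step: one must check that the three pieces of the variance — the $\kappa_4$-piece, the covariance piece and the crosscovariance piece — \emph{simultaneously} vanish when a kernel is replaced by its tail, and it is this requirement that dictates the balance of exponents in (\ref{firstAs1})--(\ref{firstAs2}), extracted by repeated use of H\"older's and Young's inequalities. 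The remaining ingredients — the $m$-dependent central limit theorem and the moment identity — are routine by comparison.
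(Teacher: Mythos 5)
Your proposal is correct and follows essentially the same route as the paper: truncation of the kernels to obtain an $m$-dependent sequence, the classical CLT for $m$-dependent variables, the cumulant/fourth-moment identity for products of L\'evy integrals, dominated convergence for $\eta_N^2\to\eta^2$ with dominating functions supplied by (\ref{firstAs1})--(\ref{firstAs2}), and a $\lim_N\limsup_n$ variance bound to transfer the limit. The only cosmetic difference is in the error step, where you split $S_n-S_n^N$ into two cross terms and reuse the covariance identity, while the paper bounds the autocovariance of the difference process directly and justifies the lag-zero $L^2(\mathbb{P})$ convergence via the Hu--Meyer/Stratonovich bound --- the same device you would need to make your ``extends by approximation'' claim for kernels not in $L^4$ fully rigorous.
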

\begin{remark}\label{GaussianRemark}
An inspection of the proof of Theorem~\ref{Qform} will reveal that assumption~(\ref{firstAs2}) is not needed in case $(L_t)_{t\in \mathbb{R}}$ is a Brownian motion. In this situation, $\kappa_4 = 0$ and the variance formula (\ref{SnVariance}) reduces to
\begin{align*}
\eta^2 =\sum_{s\in \mathbb{Z}} \gamma_{11}(s\Delta)\gamma_{22}(s\Delta)+\sum_{s\in \mathbb{Z}} \gamma_{12}(s\Delta)\gamma_{21}(s\Delta).
\end{align*}
Note that, since the following results in this section rely on Theorem~\ref{Qform}, the corresponding assumptions may be relaxed accordingly when we are in the Gaussian setting.
\end{remark}

Loosely speaking, assumptions~(\ref{firstAs1})-(\ref{firstAs3}) of Theorem~\ref{Qform} concern summability of continuous-time convolutions. Hence, by relying on a modification of Young's convolution inequality, Theorem~\ref{theoremIntro2} can be shown to be a special case Theorem~\ref{Qform} (see Lemma~\ref{youngConvolution} and the following proof of Theorem~\ref{theoremIntro2} in Section~\ref{proofs}). Example~\ref{sampleAutocovariances} and \ref{leastSquares} are possible applications of Theorem~\ref{Qform}.
\begin{example}\label{sampleAutocovariances}
Let $n,m \in \mathbb{N}$ with $m<n-1$, define the sample autocovariance of $(X_t)_{t\in \mathbb{R}}$ based on $X_\Delta,X_{2\Delta},\dots, X_{n\Delta}$ up to lag $m$ as
\begin{align}\label{sampleACF}
\hat{\gamma}_n (j) = n^{-1}\sum_{t=1}^{n-j}X_{t\Delta}X_{(t+j)\Delta},\quad j=1,\dots, m,
\end{align}
and set $\hat{\gamma}_n = (\hat{\gamma}_n(1),\dots, \hat{\gamma}_n(m))^T$. Moreover, let $\tilde{\varphi}(t) = (\varphi (t+\Delta),\dots, \varphi (t+m\Delta))^T$ and $\gamma_s = (\gamma_X((s+1)\Delta),\dots,\gamma_X((s+m)\Delta))^T$ using the notation as in (\ref{MAprocesses}) and (\ref{autCovQform}). Then for a given $\alpha = (\alpha_1,\dots,\alpha_m)^T \in \mathbb{R}^m$, it holds that
\begin{align}\label{ACFrelation}
\alpha^T \hat{\gamma}_n-\alpha^T\gamma_0 = n^{-1} \sum_{t=1}^{n} \Bigr(X^1_{t\Delta}X^2_{t\Delta}-\mathbb{E}[X^1_0X^2_0]\Bigr) + O_p\big(n^{-1}\big),
\end{align}
where $(X^1_t)_{t\in \mathbb{R}}$ and $(X^2_t)_{t\in \mathbb{R}}$ are given by (\ref{MAprocesses}) with $\varphi_1 = \varphi$ and $\varphi_2(t) = \alpha^T \tilde{\varphi}(t)$. Here $O_p(n^{-1})$ in (\ref{ACFrelation}) means that the equality holds up to a term $\varepsilon_n$ which is stochastically bounded by $n^{-1}$ (that is, $(n \varepsilon_n)_{n\in \mathbb{N}}$ is tight). Then if
\begin{align}\label{sampleACFassumption}
\int_\mathbb{R}\vert \varphi (t)\varphi (t+\cdot \Delta)\vert\, dt \in \ell^2 \quad \text{and}\quad
\big(t\mapsto \lVert \varphi (t+\cdot \Delta) \rVert_{\ell^2}^2 \big)\in L^2([0,\Delta]),
\end{align}
assumptions~(\ref{firstAs1})-(\ref{firstAs2}) of Theorem~\ref{Qform} hold and we deduce that $\alpha^T \sqrt{n}(\hat{\gamma}_n-\gamma_0)$ converges in distribution to a centered Gaussian random variable with variance $\alpha^T\Sigma \alpha$, where 
\begin{align*}
\Sigma = \kappa_4\int_0^\Delta K(t)K(t)^T\, dt + \sum_{s\in \mathbb{Z}} (\gamma_s+\gamma_{-s}) \gamma_s^T,\quad K(t) := \sum_{s\in \mathbb{Z}} \varphi (t+s\Delta)\tilde{\varphi}(t+s\Delta).
\end{align*}
By the Cramér-Wold theorem we conclude that $\sqrt{n}(\hat{\gamma}_n-\gamma_0)$ converges in distribution to a centered Gaussian vector with covariance matrix $\Sigma$. This type of central limit theorem for the sample autocovariances of continuous-time moving averages was established in \cite{cohen2013central} under the same assumptions on $\varphi$ as imposed above.
\end{example}

\begin{example}\label{leastSquares}
Motivated by the discussion in the introduction, this example will illustrate how Theorem~\ref{Qform} can be applied to show asymptotic normality of the (adequately normalized) derivative of a least squares objective function. Fix $k \in \mathbb{N}$, let $v:\mathbb{R}\to \mathbb{R}^k$ be a differentiable function with derivative $v'$ and consider
\begin{align}\label{objective}
\ell_n(\theta)  = \sum_{t=k+1}^{n} \big(X_{j\Delta}-v(\theta)^TX(j)\big)^2,\quad \theta \in \mathbb{R},
\end{align}
where $X(t) = (X_{(t-1)\Delta},\dots, X_{(t-k)\Delta})^T$. In this case
\begin{align*}
\ell_n' (\theta) = -2\sum_{t=k+1}^n \big(X_{t\Delta}-v(\theta)^T X(t)\big)v'(\theta)^T X(t),
\end{align*}
and hence it is of the same form as $S_n$ in (\ref{relevantQuantity}) with $\varphi_1 (t) = \big(-1,v(\theta)^T\big)\tilde{\varphi}(t)$ and $\varphi_2(t) = \big(0,2v'(\theta)^T \big)\tilde{\varphi}(t)$, where $\tilde{\varphi} (t) = (\varphi (t),\varphi (t-\Delta),\dots, \varphi (t-k\Delta))^T$. Suppose that $v (\theta_0)$ coincides with the vector of coefficients of the $L^2( \mathbb{P})$-projection of $X_{(k+1)\Delta}$ onto the linear span of $X_{k\Delta},\dots, X_{\Delta}$ for some $\theta_0\in \mathbb{R}$. In this case $\mathbb{E}\ell_n'(\theta_0) = 0$, and if (\ref{sampleACFassumption}) holds it thus follows from Theorem~\ref{Qform} that $\ell_n'(\theta_0)/\sqrt{n}$ converges in distribution to a centered Gaussian random variable.
\end{example}


Theorem~\ref{quadraticForms} is our most general result concerning the limiting behavior of $(Q_n -\mathbb{E}Q_n)/\sqrt{n}$ as $n\to \infty$. For notational convenience we will, for given $a:\mathbb{Z}\to\mathbb{R}$ and $f:\mathbb{R}\to \mathbb{R}$, set
\begin{align}\label{starConv}
(a\star f) (t) := \sum_{s\in \mathbb{Z}} a(s)f (t-s\Delta)
\end{align}
for any $t\in \mathbb{R}$, such that $\sum_{s\in \mathbb{Z}} \vert a(s)f (t-s\Delta)\vert < \infty$. If $a$ and $f$ are non-negative, the definition in (\ref{starConv}) is used for all $t\in \mathbb{R}$. Moreover, we write $\vert a \vert (t) = \vert a(t) \vert$ and $\vert f \vert (t) = \vert f(t)\vert$.

\begin{theorem}\label{quadraticForms} Suppose that the following statements hold:
\begin{enumerate}[(i)]
\item\label{as1} There exist $\alpha,\beta\in [1,\infty]$ with $1/\alpha + 1/\beta = 1$, such that $\int_\mathbb{R}\vert \varphi (t)\varphi (t+\cdot \Delta)\vert\, dt \in \ell^\alpha$ and $\int_\mathbb{R} (\vert b \vert \star \vert \varphi\vert) (t)\, (\vert b \vert \star \vert \varphi\vert) (t+\cdot \Delta)\, dt\in \ell^\beta$.

\item $\int_\mathbb{R}\vert \varphi (t)\vert\, (\vert b \vert \star \vert \varphi \vert)(t+\cdot \Delta)\, dt \in \ell^2$.\label{as2}

\item $\big(t\mapsto \lVert \varphi (t+\cdot \Delta)\, (\vert b \vert \star \vert \varphi \vert)(t+\cdot \Delta) \rVert_{\ell^1} \big) \in L^2([0,\Delta])$.\label{as3}

\end{enumerate}
Then, as $n\to \infty$, $(Q_n - \mathbb{E}Q_n)/\sqrt{n}$ converges in distribution to a Gaussian random variable with mean zero and variance
\begin{align}\label{quadraticFormVariance}
\begin{aligned}
\eta^2 =&\ \kappa_4\int_0^\Delta \Bigr(\sum_{s\in \mathbb{Z}} \varphi (t+s\Delta)\, (b\star \varphi)(t+s\Delta)\Bigr)^2\, dt + 2\lVert (b\star \gamma_X)(\cdot \Delta) \rVert_{\ell^2}^2.
\end{aligned}
\end{align}
\end{theorem}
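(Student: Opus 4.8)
The plan is to approximate $Q_n$ by the quantity
\[
S_n \;=\; \sum_{t=1}^n X^1_{t\Delta}X^2_{t\Delta},\qquad \varphi_1:=\varphi,\quad \varphi_2:=b\star\varphi,
\]
(an idea borrowed from \cite{giraitis1990central}) and then to invoke Theorem~\ref{Qform}. First, $\varphi_2$ is a well-defined element of $L^2$: pointwise $|b\star\varphi|\le|b|\star|\varphi|$, and evaluating the $\ell^\beta$-sequence in assumption~(\ref{as1}) at lag $0$ gives $\int_\mathbb{R}(|b|\star|\varphi|)(t)^2\,dt<\infty$, so $\varphi_2\in L^2$ and $X^2$, $S_n$ are well-defined. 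Unfolding the star-convolution, $X^2_{t\Delta}=\int_\mathbb{R}\sum_{s\in\mathbb{Z}}b(t-s)\varphi(s\Delta-r)\,dL_r$, whence
\[
R_n\;:=\;S_n-Q_n\;=\;\sum_{t=1}^n X_{t\Delta}\Bigl(X^2_{t\Delta}-\sum_{s=1}^n b(t-s)X_{s\Delta}\Bigr),
\]
where $X^2_{t\Delta}-\sum_{s=1}^n b(t-s)X_{s\Delta}=\int_\mathbb{R}\sum_{s\notin\{1,\dots,n\}}b(t-s)\varphi(s\Delta-r)\,dL_r$; since both $X^2_{t\Delta}$ and $\sum_{s=1}^n b(t-s)X_{s\Delta}$ lie in $L^2(\mathbb P)$, so does $R_n$. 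As $(Q_n-\mathbb{E}Q_n)-(S_n-\mathbb{E}S_n)=-(R_n-\mathbb{E}R_n)$, the conclusion follows from Theorem~\ref{Qform} applied to $(\varphi,b\star\varphi)$, together with Slutsky's lemma, once we establish $\mathrm{Var}(R_n)=o(n)$.

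Checking the hypotheses of Theorem~\ref{Qform} for $(\varphi_1,\varphi_2)=(\varphi,b\star\varphi)$ is routine given the pointwise bound $|b\star\varphi|\le|b|\star|\varphi|$: it yields $\int_\mathbb{R}|\varphi_2(t)\varphi_2(t+h\Delta)|\,dt\le\int_\mathbb{R}(|b|\star|\varphi|)(t)(|b|\star|\varphi|)(t+h\Delta)\,dt$, so assumption~(\ref{as1}) here gives assumption~(\ref{firstAs1}) with $(\alpha_1,\alpha_2)=(\alpha,\beta)$; similarly $\int_\mathbb{R}|\varphi_1(t)\varphi_2(t+h\Delta)|\,dt\le\int_\mathbb{R}|\varphi(t)|(|b|\star|\varphi|)(t+h\Delta)\,dt$ and $\lVert\varphi_1(t+\cdot\Delta)\varphi_2(t+\cdot\Delta)\rVert_{\ell^1}\le\lVert\varphi(t+\cdot\Delta)(|b|\star|\varphi|)(t+\cdot\Delta)\rVert_{\ell^1}$, so assumptions~(\ref{as2}) and (\ref{as3}) give assumptions~(\ref{firstAs3}) and (\ref{firstAs2}), respectively. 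Hence $(S_n-\mathbb{E}S_n)/\sqrt n$ converges to a centered Gaussian with variance (\ref{SnVariance}) for these kernels.

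The core of the argument is the bound $\mathrm{Var}(R_n)=o(n)$, which is where the three summability hypotheses are actually used. Splitting $\sum_{s\notin\{1,\dots,n\}}=\sum_{s\le 0}+\sum_{s\ge n+1}$ decomposes $R_n=R_n^L+R_n^R$; by stationarity of $(X_{t\Delta})_{t\in\mathbb{Z}}$ and the substitution $t\mapsto n+1-t$ (using $b$ even) the variance of $R_n^R$ has the same form as that of $R_n^L=\sum_{t=1}^n X_{t\Delta}\int_\mathbb{R}\pi_t(r)\,dL_r$ with $\pi_t(r):=\sum_{s\le 0}b(t-s)\varphi(s\Delta-r)$, so it suffices to treat $R_n^L$. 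Expanding $\mathrm{Var}(R_n^L)$ through the fourth-order moment identity for L\'{e}vy integrals, $\mathbb{E}\bigl[\prod_{i=1}^4\int f_i\,dL\bigr]=\kappa_4\int_\mathbb{R}\prod_i f_i(r)\,dr+\sigma^4\sum_{\text{pairings}}\int f_if_j\int f_kf_l$, leaves a $\kappa_4$-term equal to $\kappa_4\int_\mathbb{R}\bigl(\sum_{t=1}^n\varphi(t\Delta-r)\pi_t(r)\bigr)^2\,dr$ together with two ``Gaussian'' double sums built from the autocovariance $\gamma_X(\cdot\Delta)$. After passing to $[0,\Delta]$ via $\int_\mathbb{R}(\cdot)^2\,dr=\int_0^\Delta\sum_{m\in\mathbb{Z}}(\cdot)^2\,dr$, dominating $\pi_t$ by the truncated convolution and $|\gamma_X(h\Delta)|$ by $\sigma^2\int_\mathbb{R}|\varphi(t)\varphi(t+h\Delta)|\,dt$, and applying the modified Young convolution inequality (Lemma~\ref{youngConvolution}), each term is controlled by the convolution quantities of (\ref{as1})--(\ref{as3}); crucially, the constraint $s\le 0$, i.e. $t-s\ge t$, localises the sums near $t=0$ (retaining this restriction is essential---dropping it produces a $\Delta$-periodic, hence non-integrable, integrand), so the bounds are in fact $o(n)$ by dominated convergence, not merely $O(n)$. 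Carrying out this estimation and matching each term of the moment expansion to the correct combination of the three hypotheses is the main obstacle.

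It remains to identify the limiting variance. For the pair $(\varphi,b\star\varphi)$ the first term of (\ref{SnVariance}) is exactly the first term of (\ref{quadraticFormVariance}). From (\ref{autCrossCov}) one computes directly that $\gamma_{11}(s\Delta)=\gamma_X(s\Delta)$ and $\gamma_{12}(s\Delta)=\gamma_{21}(s\Delta)=(b\star\gamma_X)(s\Delta)$ (evenness of $b$ being used for $\gamma_{21}$), so $\sum_s\gamma_{12}(s\Delta)\gamma_{21}(s\Delta)=\lVert(b\star\gamma_X)(\cdot\Delta)\rVert_{\ell^2}^2$. Moreover $\gamma_{22}(s\Delta)=\sum_{u,v\in\mathbb{Z}}b(u)b(v)\gamma_X((s+u-v)\Delta)$, so writing $\Gamma(m):=\sum_{s\in\mathbb{Z}}\gamma_X(s\Delta)\gamma_X((s+m)\Delta)$ one finds that both $\sum_s\gamma_{11}(s\Delta)\gamma_{22}(s\Delta)$ and $\lVert(b\star\gamma_X)(\cdot\Delta)\rVert_{\ell^2}^2=\sum_{s}\bigl(\sum_v b(v)\gamma_X((s-v)\Delta)\bigr)^2$ equal $\sum_{u,v}b(u)b(v)\Gamma(u-v)$; hence the two covariance sums in (\ref{SnVariance}) coincide and combine into the factor $2$ in (\ref{quadraticFormVariance}). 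The absolute convergence justifying these rearrangements, and the finiteness of $\lVert(b\star\gamma_X)(\cdot\Delta)\rVert_{\ell^2}$, follow from $|\gamma_X(h\Delta)|\le\sigma^2\int_\mathbb{R}|\varphi(t)\varphi(t+h\Delta)|\,dt$ and assumptions~(\ref{as1})--(\ref{as2}).
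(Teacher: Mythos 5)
Your overall strategy is exactly the paper's: approximate $Q_n$ by $S_n$ with $\varphi_1=\varphi$, $\varphi_2=b\star\varphi$, verify the hypotheses of Theorem~\ref{Qform} for this pair via the domination $\vert b\star\varphi\vert\leq \vert b\vert\star\vert\varphi\vert$, identify the limiting variance by showing the two covariance sums in (\ref{SnVariance}) both equal $\lVert (b\star\gamma_X)(\cdot\Delta)\rVert_{\ell^2}^2$, and reduce everything to $\mathrm{Var}(S_n-Q_n)=o(n)$. Your tail decomposition $R_n=R_n^L+R_n^R$ (original indices $s\leq 0$ and $s\geq n+1$) is the paper's $\varepsilon_n$ and $\delta_n$, and the parts you actually carry out --- well-definedness of $\varphi_2$, the reduction to Theorem~\ref{Qform}, and the variance identity including the factor $2$ --- are correct.

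The genuine gap is that the central estimate $\mathrm{Var}(R_n^L)=o(n)$ is only described, not proved; you yourself call carrying it out ``the main obstacle.'' This is the technical heart of the theorem. Concretely, after the fourth-moment expansion (Lemma~\ref{fourthMoment}) one is left with three quadruple sums over $t,s,u,v\in\mathbb{Z}$ restricted by $\mathds{1}_{\{1\leq t,s\leq n\}}\mathds{1}_{\{u,v\leq 0\}}$, and each requires its own change of variables so that the restriction collapses into a counting factor of the form $n^{-1}\sum_{v}\mathds{1}_{\{\cdots\}}\leq n^{-1}\min\{\vert t\vert,n\}$, followed by dominated convergence against a summable majorant. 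Producing that majorant is where the hypotheses enter in a specific, non-interchangeable way: the $\kappa_4$-term needs $\sum_{t,s,u}\vert b(t)b(s)c(t+u,s,u)\vert<\infty$, which follows from assumption~(\ref{as3}) via the $\Delta$-periodicity of $t\mapsto\lVert\varphi(t+\cdot\Delta)(\vert b\vert\star\vert\varphi\vert)(t+\cdot\Delta)\rVert_{\ell^1}$, while the two Gaussian terms need $\sum_{s,u,v}\vert a(s)a(v-u-s)b(s+u)b(v-s)\vert<\infty$, obtained by H\"{o}lder with exponents $\alpha,\beta$ from assumption~(\ref{as1}) applied directly to the discrete sums (not via Lemma~\ref{youngConvolution}, which concerns continuous convolutions and is not what is used here). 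Your sketch asserts ``each term is controlled by the convolution quantities of (\ref{as1})--(\ref{as3})'' without exhibiting the substitutions or matching terms to hypotheses, so the proof as written is incomplete at precisely the step the three assumptions were designed for.
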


\begin{remark}\label{QnApprox}
The idea in the proof of Theorem~\ref{quadraticForms} is to approximate $Q_n$ by $S_n$ with $\varphi_1 = \varphi$ and $\varphi_2 = b \star \varphi$. The conditions imposed in Theorem~\ref{quadraticForms} correspond to assuming that $\varphi$ and $\vert b \vert \star \vert \varphi\vert$ satisfy (\ref{firstAs1})-(\ref{firstAs2}) of Theorem~\ref{Qform}. In particular, these conditions ensure that $S_n$ is well-defined and that Theorem~\ref{Qform} applies to this choice of $\varphi_1$ and $\varphi_2$. The only lacking part in order to deduce Theorem~\ref{quadraticForms} from Theorem~\ref{Qform} is to show that $S_n$ is in fact a proper approximation of $Q_n$ in the sense that $\text{Var}(Q_n-S_n)/n\to 0$ as $n\to \infty$, but this is verified in Section~\ref{proofs} where the proofs of the stated results can be found.
\end{remark}

\begin{remark}\label{SufficientQuadForm}
Note that for any $s\in \mathbb{Z}$ with $b(s)\neq 0$, it holds that 
\begin{align}\label{convPhiIneq}
\vert \varphi (t)\vert\leq \vert b(s)\vert^{-1}(\vert b\vert \star \vert \varphi \vert) (t+s\Delta)
\end{align}
for all $t\in \mathbb{R}$. This fact ensures that assumptions~(\ref{as1})-(\ref{as2}) of Theorem~\ref{quadraticForms} hold if there exists $\beta\in [1,2]$ such that
\begin{align}\label{conditionQform}
\int_\mathbb{R}(\vert b\vert \star \vert \varphi \vert) (t)\, (\vert b\vert \star \vert \varphi \vert)(t+\cdot \Delta)\, dt \in \ell^\beta.
\end{align}
(Here we exclude the trivial case $b\equiv 0$.) Indeed, if (\ref{conditionQform}) is satisfied we can choose $\alpha \geq \beta$ such that $1/\alpha + 1/\beta = 1$ and then assumptions (\ref{as1})-(\ref{as2}) are met due to the inequality (\ref{convPhiIneq}) and the fact that $\ell^\beta \subseteq \ell^\alpha \cap \ell^2$.
\end{remark}

\begin{remark}
We will now briefly comment on the conditions of Theorem~\ref{theoremIntro} and Theorem~\ref{theoremIntro2}, particularly on sufficient conditions for applying Theorem~\ref{Qform} and Theorem~\ref{quadraticForms}. We will restrict our attention to assumptions of the type
\begin{align}\label{remarkAssump}
\big(t\mapsto\lVert \psi (t+\cdot \Delta)\rVert_{\ell^\kappa}^\kappa\big) \in L^2([0,\Delta]),
\end{align}
where $\psi:\mathbb{R}\to \mathbb{R}$ is a measurable function and $\kappa \geq 1$. First of all, note that the weaker condition $\big(t\mapsto\lVert \psi (t+\cdot \Delta)\rVert_{\ell^\kappa}^\kappa\big) \in L^1([0,\Delta])$ is satisfied if and only if $\psi \in L^\kappa$, and condition (\ref{remarkAssump}) implies $\psi \in L^{2\kappa}$. In particular, a necessary condition for (\ref{remarkAssump}) to hold is that $\psi \in L^\kappa \cap L^{2\kappa}$. On the other hand, one may decompose $\lVert \psi (t+\cdot \Delta) \rVert_{\ell^\kappa}^\kappa$ as
\begin{align}\label{remarkDecomposition}
\lVert\psi (t+\cdot \Delta) \rVert_{\ell^\kappa}^\kappa &= \sum_{s=-M}^M \vert \psi (t+s\Delta)\vert^\kappa+ \sum_{s=M+1}^\infty \big(\vert \psi (t+s\Delta)\vert^\kappa + \vert \psi (t-s\Delta)\vert^\kappa\big)
\end{align}
for any $M \in \mathbb{N}$. The first term on right-hand side of (\ref{remarkDecomposition}) belongs to $L^2([0,\Delta])$ (viewed as a function of $t$) if $\psi \in L^{2\kappa}$. If in addition $\psi \in L^\kappa$, the second term on the right-hand tends to zero as $M\to \infty$ for (Lebesgue almost) all $t\in [0,\Delta]$. If this could be assumed to hold uniformly across all $t$, that is, if the second term belongs to $L^\infty ([0,\Delta])$ for a sufficiently large $M$, then (\ref{remarkAssump}) would be satisfied. Therefore, loosely speaking, the difference between $L^\kappa\cap L^{2\kappa}$ and the space of functions satisfying (\ref{remarkAssump}) consists of functions $\psi$ where the second term in (\ref{remarkDecomposition}) tends to zero pointwise, but not uniformly, in $t$ as $M\to \infty$. Ultimately, this is a condition on the behavior of the tail of the function between grid points. For instance, if there exists a sequence $(\psi_s)_{s\in \mathbb{Z}}$ in $\ell^\kappa$ such that $\sup_{t\in [0,\Delta]}\vert \psi (t\pm s\Delta) \vert\leq \psi_s$ for all sufficiently large $s$, then (\ref{remarkAssump}) holds. An assumption such as (\ref{remarkAssump}) seems to be necessary and is the cost of considering a continuous-time process only on a discrete-time grid. In \cite{cohen2013central}, where they prove a central limit theorem for the sample autocovariance of a continuous-time moving average in a low-frequency setting, a similar condition is imposed.
\end{remark}

In the following examples we will put some attention on concrete specifications of moving average processes, where the behavior of the corresponding kernel is known, and hence Theorem~\ref{theoremIntro} and \ref{theoremIntro2} may be applicable. 

\begin{example}\label{CARMAprocesses}
Fix $p \in \mathbb{N}$ and let $P(z) = z^p + a_1z^{p-1}+ \cdots + a_p$ and $Q(z) = b_0 + b_1z + \cdots + b^{p-1}z^{p-1}$, $z\in \mathbb{C}$, be two real polynomials where all the zeroes of $P$ are contained in $\{z \in \mathbb{C}\, :\, \text{Re}(z)<0\}$. Moreover, let $q \in \mathbb{N}_0$ with $q<p$ and suppose that $b_q = 1$ and $b_k = 0$ for $q<k\leq p-1$. Finally, define
\begin{align*}
A = \begin{bmatrix} 0 & 1 & 0 & \cdots & 0 \\
0 & 0 & 1 & \cdots & 0 \\
\vdots & \vdots & \vdots & \ddots & \vdots \\
0 & 0 & 0 &\cdots & 1 \\
- a_p & - a_{p-1} & -a_{p-2} & \cdots & -a_1
\end{bmatrix},\quad b = \begin{bmatrix} b_0 \\ b_1 \\ \vdots \\ b_{p-2} \\ b_{p-1} \end{bmatrix} \quad \text{and} \quad e_p = \begin{bmatrix} 0 \\ 0 \\ \vdots \\ 0 \\ 1 \end{bmatrix}.
\end{align*}
Then the corresponding (causal) CARMA($p,q$) process $(X_t)_{t\in \mathbb{R}}$ is given by
\begin{align}\label{CARMAdef}
X_t = \int_{-\infty}^t b^T e^{A(t-u)} e_p\, dL_u,\quad t \in \mathbb{R}.
\end{align}
(See \cite[Remark~3.2]{marquardt2007multivariate}.) The definition in (\ref{CARMAdef}) is based on a state-space representation of the more intuitive formal differential equation
\begin{align}\label{formalDiffEq}
P(D)X_t = Q(D)DL_t,\quad t \in  \mathbb{R},
\end{align}
where $D$ denotes differentiation with respect to time. Equation (\ref{formalDiffEq}) should be compared to the corresponding representation of an ARMA process in terms of the backward-shift operator. Since it can be shown that the eigenvalues of $A$ correspond to the roots of $P$, the kernel $\varphi:t \mapsto \mathds{1}_{[0,\infty)}(t)b^Te^{At}e_p$ is exponentially decaying at infinity. Combining this with the (absolute) continuity of $\varphi$ on $[0,\infty)$ ensures that the kernel belongs to $L^\infty$ as well. In particular, this shows that Theorem~\ref{theoremIntro}(\ref{introAs1}) holds as long as $b\in \ell^2$. For more on CARMA processes, we refer to \cite{brockwell2001levy,brockwell2011estimation,brockwell2009existence}.
\end{example}

\begin{example}\label{SDDEexample}
Let $\eta$ be a finite signed measure on $[0,\infty)$ and suppose that
\begin{align*}
z+\int_{[0,\infty)} e^{zu}\, \eta (du) \neq 0
\end{align*}
for every $z \in \mathbb{C}$ with $\text{Re}(z)\leq 0$. Then it follows from \cite[Theorem~3.4]{contARMA} that the unique stationary solution $(X_t)_{t\in \mathbb{R}}$ to the corresponding stochastic delay differential equation
\begin{align*}
dX_t = \int_{[0,\infty)}X_{t-s}\, \eta (ds)\, dt + dL_t,\quad t\in \mathbb{R},
\end{align*}
takes the form $X_t = \int_{-\infty}^t \varphi(t-s)\, dL_s$, where $\varphi:\mathbb{R}\to \mathbb{R}$ is characterized as the unique $L^2$ function satisfying $\varphi (t) = 0$ for $t<0$ and
\begin{align*}
\varphi (t) = 1 + \int_0^t \int_{[0,\infty)} \varphi (s-u)\, \eta (du)\, ds
\end{align*}
for $t\geq 0$. Consequently, it follows form the integration by parts formula that
\begin{align}\label{supSDDEbound}
\begin{aligned}
\sup_{t\geq 0} t^p\vert \varphi (t)\vert \leq &\, p\int_0^\infty  t^{p-1}\vert \varphi (t)\vert\, dt + 2^p \vert \eta \vert ([0,\infty))\int_0^\infty  t^p\vert \varphi (t)\vert\, dt\\
& + 2^p\int_{[0,\infty)}t^p\, \vert \eta \vert (dt)\int_0^\infty \vert \varphi (t)\vert\, dt
\end{aligned}
\end{align}
for a given $p\geq 1$. Here $\vert \eta \vert$ is the variation measure of $\eta$. If one assumes that $\vert \eta \vert$ has moments up to order $p+1$, that is,
\begin{align*}
\int_{[0,\infty)} t^{p+1} \vert \eta \vert (dt) < \infty,
\end{align*}
it follows by \cite[Lemma~3.2]{contARMA} that the measure $\vert \varphi (t)\vert \, dt$ is finite and has moments up to order $p$. Consequently, under this assumption we have that $\sup_{t\geq 0} t^p\, \vert \varphi (t)\vert<\infty$ by (\ref{supSDDEbound}) and Theorem~\ref{theoremIntro}(\ref{introAs2}) holds as long as $\sup_{t\in \mathbb{Z}} \vert t\vert^{1/2+\delta}\vert b(t)\vert < \infty$ for some $\delta >0$.
\end{example}

\begin{example}\label{long-memory}
Suppose that $(X_t)_{t\in \mathbb{R}}$ is given by (\ref{MAprocesses}) with
\begin{align*}
\varphi (t) = \frac{1}{\Gamma (1+d)}\big[t_+^{d}- (t-1)_+^{d}\big],\quad t \in \mathbb{R},
\end{align*}
and $d \in (0,1/4)$. (Here $\Gamma (1+d) = \int_0^\infty u^{d}e^{-u}\, du$ is the Gamma function at $1+d$.) In other words, we assume that $(X_t)_{t\in \mathbb{R}}$ is a fractional L\'{e}vy noise with parameter $d$. Recall that $\gamma_X(h) \sim c h^{2d-1}$ as $h\to \infty$ for a suitable constant $c>0$ (see, e.g., \cite[Theorem~6.3]{Tina}), and hence we are in a setup where
\begin{align*}
\sum_{s \in \mathbb{Z}} \vert \gamma_X(s\Delta)\vert = \infty, \quad \text{but}\quad \sum_{s \in \mathbb{Z}} \gamma_X(s \Delta)^2 < \infty.
\end{align*}
Moreover, it is shown in \cite[Theorem~A.1]{cohen2013central} that $(X_{t\Delta})_{t\in \mathbb{Z}}$ is not strongly mixing. However, Theorem~\ref{theoremIntro} and \ref{theoremIntro2} may still be applied in this setup, since $\varphi$ is vanishing on $(-\infty,0)$, continuous on $\mathbb{R}$, and $\varphi (t) \sim d\, t^{d-1}/\Gamma (1+d)$ as $t\to \infty$.
\end{example}

\section{Proofs}\label{proofs}
The first observation will be used in the proof of Theorem~\ref{Qform}.

\begin{lemma}\label{fourthMoment}
Let $g_1,g_2,g_3,g_4:\mathbb{R}\to \mathbb{R}$ be functions in $L^2 \cap L^4$. Then it holds that
\begin{align}\label{keyRelation}
\begin{aligned}
\MoveEqLeft\mathbb{E}\prod_{j=1}^4 \int_\mathbb{R}g_j(u)\, dL_u \\
&= \kappa_4 \int_\mathbb{R} \prod_{j=1}^4 g_j(u)\, du + \sigma^4 \biggr(\int_\mathbb{R}g_1(u) g_2(u)\, du\, \int_\mathbb{R}g_{3}(u)g_4(u)\, du\\
&+\int_\mathbb{R}g_1(u) g_3(u)\, du\, \int_\mathbb{R}g_2(u)g_4(u)\, du + \int_\mathbb{R}g_1(u) g_4(u)\, du\, \int_\mathbb{R}g_2(u)g_3(u)\, du\biggr).
\end{aligned}
\end{align}
\end{lemma}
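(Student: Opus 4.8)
The plan is to reduce the general case to the case of simple functions by an approximation argument, and on simple functions to exploit the independence and stationarity of the increments of $(L_t)_{t\in\mathbb{R}}$ together with the explicit cumulant structure. First I would recall that for a single increment $L_b-L_a$ of length $h=b-a$ the first four cumulants are $0$, $\sigma^2 h$, $\kappa_3 h$ and $\kappa_4 h$ (where $\kappa_3=\mathbb{E}L_1^3$), so that for a step function $g=\sum_i c_i\mathds{1}_{(a_i,b_i]}$ with disjoint intervals of lengths $h_i$ one has $\int g\,dL=\sum_i c_i(L_{b_i}-L_{a_i})$, a sum of independent centered random variables. For four such step functions $g_1,\dots,g_4$, subordinated to a common partition into disjoint intervals $I_1,\dots,I_N$ with lengths $h_1,\dots,h_N$ (which one can always arrange by taking a common refinement), write $g_j=\sum_{k=1}^N c_{jk}\mathds{1}_{I_k}$ and $Z_k=L(I_k)$, so the $Z_k$ are independent with $\mathbb{E}Z_k=0$, $\mathbb{E}Z_k^2=\sigma^2 h_k$, $\mathbb{E}Z_k^3=\kappa_3 h_k$ and $\mathbb{E}Z_k^4=(\kappa_4+3\sigma^4)h_k^2+\kappa_4$... more precisely $\mathbb{E}Z_k^4=\kappa_4 h_k+3\sigma^4 h_k^2$.

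Next I would expand $\mathbb{E}\prod_{j=1}^4\int g_j\,dL=\sum_{k_1,k_2,k_3,k_4}c_{1k_1}c_{2k_2}c_{3k_3}c_{4k_4}\,\mathbb{E}[Z_{k_1}Z_{k_2}Z_{k_3}Z_{k_4}]$ and use independence to see that the expectation vanishes unless the indices $k_1,\dots,k_4$ pair up or all coincide. The all-distinct and the $3+1$ patterns contribute $0$ because $\mathbb{E}Z_k=0$; the $2+2$ patterns (three ways to split four labels into two pairs) contribute $\sum_{k\neq l}(\sigma^2 h_k)(\sigma^2 h_l)$ times the appropriate product of coefficients; and the all-equal pattern contributes $\sum_k \mathbb{E}Z_k^4 = \sum_k(\kappa_4 h_k+3\sigma^4 h_k^2)$ times $c_{1k}c_{2k}c_{3k}c_{4k}$. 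Reassembling, the $3\sigma^4 h_k^2$ diagonal term merges with the off-diagonal $2+2$ sums to complete the three full double sums $\sum_{k,l}(\sigma^2 h_k)(\sigma^2 h_l)c_{ik}c_{jk}c_{i'l}c_{j'l}=\bigl(\int g_i g_j\bigr)\bigl(\int g_{i'}g_{j'}\bigr)$, while the $\kappa_4 h_k$ diagonal term gives $\kappa_4\sum_k h_k\,c_{1k}c_{2k}c_{3k}c_{4k}=\kappa_4\int\prod_j g_j$. This establishes (\ref{keyRelation}) for simple functions.

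Finally, to pass to general $g_1,\dots,g_4\in L^2\cap L^4$ I would approximate each $g_j$ by a sequence of simple functions $g_j^{(m)}$ converging to $g_j$ both in $L^2$ and in $L^4$ (standard, e.g. truncation plus dyadic approximation). By the $L^2$-isometry and the continuity of the stochastic integral, $\int g_j^{(m)}\,dL\to\int g_j\,dL$ in $L^2(\mathbb{P})$; to take limits inside the fourth-moment expectation one needs convergence in $L^4(\mathbb{P})$, which follows because $\int(g_j^{(m)}-g_j)\,dL$ has bounded fourth moment controlled by $\lVert g_j^{(m)}-g_j\rVert_{L^2}$ and $\lVert g_j^{(m)}-g_j\rVert_{L^4}$ via the very identity just proved (applied to differences) — hence $\int g_j^{(m)}\,dL\to\int g_j\,dL$ in $L^4(\mathbb{P})$, and a four-fold application of Hölder gives convergence of the product's expectation. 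On the right-hand side, $\int g_i^{(m)}g_j^{(m)}\to\int g_ig_j$ by Cauchy--Schwarz and $L^2$ convergence, and $\int\prod_j g_j^{(m)}\to\int\prod_j g_j$ by a generalized Hölder argument using $L^4$ convergence of each factor. The main obstacle is the bookkeeping in the combinatorial step — correctly tracking which index-coincidence patterns survive and checking that the $3\sigma^4$ piece of $\mathbb{E}Z_k^4$ is exactly what is needed to turn the restricted off-diagonal double sums into full double sums (i.e. $\int g_ig_j$ factors) — together with making the $L^4(\mathbb{P})$ approximation rigorous; neither is deep, but both require care.
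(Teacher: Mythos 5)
Your proof is correct in substance but takes a genuinely different route from the paper. The paper works directly with general $g_j\in L^2\cap L^4$: it invokes the moment--cumulant relation $\mathbb{E}[Y_1Y_2Y_3Y_4]=\mathrm{Cum}(Y_1,Y_2,Y_3,Y_4)+\mathbb{E}[Y_1Y_2]\mathbb{E}[Y_3Y_4]+\cdots$, represents the joint characteristic function as $\log\mathbb{E}e^{i(u_1Y_1+\cdots+u_4Y_4)}=\int_{\mathbb{R}}\psi_L(u_1g_1(t)+\cdots+u_4g_4(t))\,dt$ (Rajput--Rosi{\'n}ski), and differentiates four times under the integral sign to identify $\mathrm{Cum}(Y_1,\dots,Y_4)=\kappa_4\int_{\mathbb{R}}\prod_jg_j$; the $\sigma^4$ terms then come from the isometry. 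You instead establish the identity by hand on step functions via independence of increments and the moments $\mathbb{E}Z_k^4=\kappa_4h_k+3\sigma^4h_k^2$, and then pass to the limit. Your combinatorics is right: only the $2{+}2$ and all-equal coincidence patterns survive, and the $3\sigma^4h_k^2$ diagonal contribution is exactly what completes the three restricted double sums to the full products $\int g_ig_j\cdot\int g_{i'}g_{j'}$. The paper's route buys brevity (no approximation step, no combinatorial bookkeeping) at the cost of citing the cumulant formula and justifying differentiation under the integral; yours is more elementary and self-contained but needs the limit passage.

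One step in that limit passage needs repair: you propose to control $\mathbb{E}\bigl[\bigl(\int(g_j^{(m)}-g_j)\,dL\bigr)^4\bigr]$ ``via the very identity just proved, applied to differences,'' but at that stage the identity is available only for simple functions, and $g_j^{(m)}-g_j$ is not simple, so as written the argument is circular. The standard fix is to apply the identity to $g_j^{(m)}-g_j^{(m')}$ (a difference of simple functions, hence simple), conclude that $\bigl(\int g_j^{(m)}\,dL\bigr)_m$ is Cauchy in $L^4(\mathbb{P})$, and identify its $L^4$-limit with $\int g_j\,dL$ through the already known $L^2(\mathbb{P})$ convergence. With that adjustment the H\"older argument for the product and the convergence of the right-hand side go through as you describe.
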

\begin{proof}
Set $Y_i = \int_\mathbb{R}g_i(u)\, dL_u$. Then, using \cite[Proposition~4.2.2]{giraitis2012large}, we obtain that
\begin{align}\label{shirFormulas}
\begin{aligned}
\mathbb{E}[Y_1Y_2Y_3Y_4] =& \text{Cum}(Y_1,Y_2,Y_3,Y_4) + \mathbb{E}[Y_1Y_2]\mathbb{E}[Y_3Y_4]\\
 &+ \mathbb{E}[Y_1Y_3]\mathbb{E}[Y_2Y_4] + \mathbb{E}[Y_1Y_4]\mathbb{E}[Y_2Y_3],
 \end{aligned}
\end{align}
where
\begin{align*}
\text{Cum} (Y_1,Y_2,Y_3,Y_4) = \frac{\partial^4}{\partial u_1 \cdots \partial u_4}\log \mathbb{E}e^{i(u_1Y_1 +\cdots +u_4Y_4)}\biggr\vert_{u_1= \cdots = u_4 = 0}.
\end{align*}
Set $\psi_L (u) = \log \mathbb{E}e^{iuL_1}$. It follows from the L\'{e}vy-Khintchine representation that we can find a constant $C>0$ such that $\vert\psi_L^{(1)}(u)\vert \leq C\vert u \vert$ and $\vert\psi_L^{(m)}(u)\vert\leq C$ for $m=2,3,4$. (Here $\psi^{(m)}_L$ is the $m$-th derivative of $\psi_L$.) Using this together with the representation
\begin{align*}
\log\mathbb{E}e^{i(u_1Y_1 + \cdots + u_4Y_4)} = \int_\mathbb{R}\psi_L(u_1g_1(t) + \cdots + u_4g_4(t))\, dt,
\end{align*}
see \cite{rosSpec}, we can interchange differentiation and integration to obtain
\begin{align*}
\MoveEqLeft\text{Cum}(Y_1,Y_2,Y_3,Y_4) \\&= \int_\mathbb{R}\psi^{(4)}_L(u_1g_1(t) + \cdots + u_4g_4(t))\prod_{j=1}^4g_j(t)\, dt\biggr\vert_{u_1=\cdots = u_4 = 0}= \kappa_4 \int_\mathbb{R}\prod_{j=1}^4g_j(t)\, dt.
\end{align*}
By combining this observation with the fact that $\mathbb{E}[Y_jY_k] = \sigma^2 \int_\mathbb{R}g_j(u)g_k(u)\, du$ (using the isometry property), the result is an immediate consequence of (\ref{shirFormulas}).
\end{proof}
\begin{remark}
In case $g_0 = g_1=g_2=g_3$, Lemma~\ref{fourthMoment} collapses to \cite[Lemma~3.2]{cohen2013central}, and if $(L_t)_{t\in \mathbb{R}}$ is a Brownian motion we have that $\kappa_4 = 0$ and the result is a special case of Isserlis' theorem.
\end{remark}

We are now ready to prove Theorem~\ref{Qform}.

\begin{proof}[Proof of Theorem~\ref{Qform}]
The proof goes by approximating $(X^1_{t\Delta}X^2_{t\Delta})_{t\in \mathbb{Z}}$ by a $k$-dependent sequence (cf. \cite[Definition~6.4.3]{brockDavis}), to which we can apply a classical central limit theorem. Fix $m>0$, and set $\varphi_i^m = [(-m)\vee \varphi_i\wedge m]\mathds{1}_{[-m,m]}$ and
\begin{align*}
X_t^{i,m} =\int_\mathbb{R} \varphi_i^m (t-s)\, dL_s= \int_{t-m}^{t+m} \varphi_i^m (t-s)\, dL_s,\quad t \in \mathbb{R},
\end{align*}
for $i=1,2$. Furthermore, set
\begin{align*}
S_n^m = \sum_{t=1}^n X^{1,m}_{t\Delta}X^{2,m}_{t\Delta},\quad n \in \mathbb{N}.
\end{align*}
Note that since $\varphi_i^m\in L^2 \cap L^4$ and $\varphi_i^m (t) = 0$ when $\vert t \vert >m$, $(X^{1,m}_{t\Delta}X^{2,m}_{t\Delta})_{t\in \mathbb{Z}}$ is a $k(m)$-dependent sequence of square integrable random variables, where $k(m)  = \inf\{n \in \mathbb{N}\, :\, n\geq 2m/\Delta \}$. Hence, we can apply \cite[Theorem~6.4.2]{brockDavis} to deduce that
\begin{align*}
\frac{S_n^m-\mathbb{E}S_n^m}{\sqrt{n}} \overset{d}{\to} Y_m, \quad n \to \infty,
\end{align*}
where $Y_m$ is a Gaussian random variable with mean zero and variance 
\begin{align}\label{etaTrunc}
\eta_m^2= \sum_{s=-k(m)}^{k(m)} \gamma_{X^{1,m}X^{2,m}}(s\Delta).
\end{align}
Here $\gamma_{X^{1,m}X^{2,m}}$ denotes the autocovariance function of $(X^{1,m}_tX^{2,m}_t)_{t\in \mathbb{R}}$. Next, we need to argue that $\eta^2_m \to \eta^2$ with $\eta^2$ given by (\ref{SnVariance}). Since $\varphi_i^m \in L^2\cap L^4$ we can use Lemma~\ref{fourthMoment} to compute $\gamma_{X^{1,m}X^{2,m}}(s\Delta)$ for each $s\in \mathbb{Z}$:
\begin{align}\label{autocovTrunc}
\begin{aligned}
\MoveEqLeft\gamma_{X^{1,m}X^{2,m}}(s\Delta) \\
=&\, \kappa_4 \int_\mathbb{R} \varphi_1^m(t) \varphi_2^m(t)\varphi_1^m (t+s\Delta)\varphi_2^m (t+s\Delta)\, dt+ \sigma^4\int_\mathbb{R}\varphi_1^m(t)\varphi_1^m(t+s\Delta)\, dt\\
&\cdot \int_\mathbb{R}\varphi_2^m(t)\varphi_2^m(t+s\Delta)\, dt +  \sigma^4\int_\mathbb{R}\varphi_1^m(t)\varphi_2^m(t+s\Delta)\, dt \cdot \int_\mathbb{R}\varphi_2^m(t)\varphi_1^m(t+s\Delta)\, dt.
\end{aligned}
\end{align}
Note that $\sigma^4\int_\mathbb{R}\varphi_i^m (t) \varphi_j^m(t+s\Delta)\, dt \to \gamma_{ij}(s\Delta)$, since $\varphi_i^m \to \varphi_i$ in $L^2$. By using assumption~(\ref{firstAs2}) and that $F:t\mapsto \sum_{s\in \mathbb{Z}}\vert \varphi_1(t+s\Delta)\varphi_2(t+s\Delta)\vert$ is a periodic function with period $\Delta$ we establish as well that
\begin{align}\label{aa}
\begin{aligned}
\MoveEqLeft\sum_{s \in \mathbb{Z}} \int_\mathbb{R} \vert \varphi_1(t)\varphi_2(t)\varphi_1(t+s\Delta) \varphi_2(t+s\Delta) \vert \, dt \\
&= \sum_{s\in \mathbb{Z}} \int_{s\Delta}^{(s+1)\Delta}\vert \varphi_1(t) \varphi_2(t)\vert F(t)\, dt = \int_0^\Delta F(t)^2\, dt<\infty.
\end{aligned}
\end{align}
In particular, Lebesgue's theorem on dominated convergence implies
\begin{align*}
\int_\mathbb{R} \varphi_1^m(t) \varphi_2^m(t)\varphi_1^m (t+s\Delta)\varphi_2^m (t+s\Delta)\, dt \to \int_\mathbb{R} \varphi_1(t) \varphi_2(t)\varphi_1 (t+s\Delta)\varphi_2 (t+s\Delta)\, dt.
\end{align*}
Combining these observations with (\ref{autocovTrunc}) shows that $\gamma_{X^{1,m}X^{2,m}}(s\Delta)\to \gamma_s$ for each $s\in \mathbb{Z}$, where 
\begin{align*}
\gamma_s=&\, \kappa_4 \int_\mathbb{R} \varphi_1(t) \varphi_2(t)\varphi_1 (t+s\Delta)\varphi_2 (t+s\Delta)\, dt + \gamma_{11}(s\Delta)
\gamma_{22}(s\Delta) +  \gamma_{12}(s\Delta)
\gamma_{21}(s\Delta)
\end{align*}
It follows as well from (\ref{autocovTrunc}) that
\begin{align}\label{gammaDom}
\begin{aligned}
\MoveEqLeft\vert \gamma_{X^{1,m}X^{2,m}}(s\Delta)\vert \\
\leq &\vert \kappa_4 \vert \int_\mathbb{R}\vert \varphi_1(t)\varphi_2(t)\varphi_1(t+s\Delta) \varphi_2(t+s\Delta) \vert \, dt + \sigma^4 \int_\mathbb{R} \vert \varphi_1(t)\varphi_1(t+s\Delta)\vert\, dt \\
&\cdot \int_\mathbb{R} \vert \varphi_2(t)\varphi_2(t+s\Delta)\vert\, dt  +\sigma^4 \int_\mathbb{R} \vert \varphi_1(t)\varphi_2(t+s\Delta)\vert\, dt \cdot \int_\mathbb{R} \vert \varphi_2(t)\varphi_1(t+s\Delta)\vert\, dt.
\end{aligned}
\end{align}
Thus, if we can argue that the three terms on the right-hand side of (\ref{gammaDom}) are summable over $s\in \mathbb{Z}$, we conclude from (\ref{etaTrunc}) that $\eta_m^2 \to \sum_{s\in \mathbb{Z}}\gamma_s = \eta^2$ by dominated convergence. In (\ref{aa}) it was shown that the first term is summable. For the second term we apply H\"{o}lder's inequality to obtain
\begin{align*}
\Bigr\lVert\int_\mathbb{R} \vert \varphi_1(t)\varphi_1(t+\cdot\Delta)\vert\, dt\, \int_\mathbb{R} \vert \varphi_2(t)\varphi_2(t+\cdot\Delta)\vert\, dt\Bigr\rVert_{\ell^1}  \leq \prod_{i=1}^2 \Bigr\lVert \int_\mathbb{R}\vert\varphi_i (t)\varphi_i (t+\cdot \Delta)\vert\, dt \Bigr\rVert_{\ell^{\alpha_i}},
\end{align*}
which is finite by assumption~(\ref{firstAs1}). The last term is handled in the same way using the Cauchy-Schwarz inequality and assumption~(\ref{firstAs3}): 
\begin{align*}
\MoveEqLeft\Bigr\lVert\int_\mathbb{R} \vert \varphi_1(t)\varphi_2(t+\cdot \Delta)\vert\, dt\, \int_\mathbb{R} \vert \varphi_2(t)\varphi_1(t+\cdot \Delta)\vert\, dt\Bigr\rVert_{\ell^1} \\
&\leq \Bigr\lVert\int_\mathbb{R} \vert \varphi_1(t)\varphi_2(t+\cdot \Delta)\vert\, dt\Bigr\rVert_{\ell^2}^2<\infty.
\end{align*} 
Consequently, $Y_m$ converges in distribution to a Gaussian random variable with mean zero and variance $\eta^2$. In light of this, the result is implied by \cite[Proposition~6.3.10]{brockDavis} if the following condition holds:
\begin{align}\label{slutskyCond}
\forall \varepsilon >0:\, \lim_{m \to \infty}\limsup_{n\to \infty}\mathbb{P}\big(\big\vert n^{-1/2}(S_n-\mathbb{E}S_n)- n^{-1/2}(S_n^m-\mathbb{E}S_n^m)\big\vert >\varepsilon \big)  = 0.
\end{align}
In order to show (\ref{slutskyCond}) we find for fixed $m$, using \cite[Theorem~7.1.1]{brockDavis},
\begin{align*}
\MoveEqLeft\limsup_{n\to \infty}\mathbb{E}\big[\big(n^{-1/2}(S_n-\mathbb{E}S_n)- n^{-1/2}(S_n^m-\mathbb{E}S_n^m)\big)^2 \big] \\
&= \limsup_{n\to \infty}n\mathbb{E}\biggr[\biggr(n^{-1}\sum_{s=1}^n \big(X^1_{s\Delta}X^2_{s\Delta} - X^{1,m}_{s\Delta}X^{2,m}_{s\Delta} \big)-\mathbb{E}\big[X^1_0X^2_0-X^{1,m}_0X^{2,m}_0\big]\biggr)^2 \biggr]\\
&= \sum_{s \in \mathbb{Z}}\gamma_{X^1X^2-X^{1,m}X^{2,m}}(s\Delta)
\end{align*}
where $\gamma_{X^1X^2 - X^{1,m}X^{2,m}}$ is the autocovariance function for $(X^1_tX^2_t - X^{1,m}_tX^{2,m}_t)_{t\in \mathbb{R}}$. First, we will establish that $X^{1,m}_0X^{2,m}_0\to X^1_0X^2_0$ in $L^2(\mathbb{P})$. To this end, recall that if a measurable function $f:\mathbb{R}^2\to \mathbb{R}$ is square integrable (with respect to the Lebesgue measure on $\mathbb{R}^2$) and $t \mapsto f(t,t)$ belongs to $L^1\cap L^2$, then the two-dimensional with-diagonal (Stratonovich type) integral $I^{S}(f)$ of $f$ with respect to $(L_t)_{t\in \mathbb{R}}$ is well-defined and by the Hu-Meyer formula,
\begin{align}\label{IntegralBound}
\mathbb{E}\big[I^{S}(f)^2\big] \leq C \Bigr[ \int_{\mathbb{R}^2}f(s,t)^2\, d(s,t) + \int_\mathbb{R} f(t,t)^2\, dt + \Bigr(\int_\mathbb{R}f(t,t)\, dt \Bigr)^2\Bigr]
\end{align} 
for a suitable constant $C>0$. A fundamental property of the Stratonovich integral is that it satisfies the relation
\begin{align*}
I^S(f) = \int_\mathbb{R} g(t)\, dL_t\, \int_\mathbb{R} h(t)\, dL_t,
\end{align*}
when $f (s,t) = g\otimes h (s,t) := g(s)h(t)$ for given measurable functions $g,h:\mathbb{R}\to \mathbb{R}$ such that $g,h,gh\in L^2$. (See \cite{bai2016limit,farre2010multiple} for details.) Since $\varphi_1\varphi_2 \in L^2$ according to (\ref{aa}), we can write $I^S(\varphi_1\otimes \varphi_2(-\cdot) - \varphi_1^m\otimes\varphi_2^m(-\cdot)) = X^1_0X^2_0 - X^{1,m}_0X^{2m}_0$, and hence (\ref{IntegralBound}) shows that
\begin{align}\label{LconvergenceStrat}
\begin{aligned}
\MoveEqLeft\mathbb{E}\big[\big(X^{1}_0X^2_0-X^{1,m}_0X^{2,m}_0\big)^2\big] \\
\leq &\, C\Bigr[ \int_{\mathbb{R}^2}\big(\varphi_1(s)\varphi_2(t)-\varphi^m_1(s)\varphi^m_2(t)\big)^2\, d(s,t)+ \int_\mathbb{R} \big(\varphi_1(t)\varphi_2(t)-\varphi^m_1(t)\varphi^m_2(t)\big)^2\, dt \\
&+ \Bigr(\int_\mathbb{R}\varphi_1(t)\varphi_2(t)\, dt-\int_\mathbb{R}\varphi^m_1(t)\varphi^m_2(t)\, dt \Bigr)^2\Bigr]
\end{aligned}
\end{align}
for a suitable constant $C>0$. It is clear that the three terms on the right-hand side of (\ref{LconvergenceStrat}) tend to zero as $m$ tends to infinity, and thus we have that $X^{1,m}_tX^{2,m}_t\to X^{1}_tX^{2}_t$ in $L^2 (\mathbb{P})$. In particular, this shows that $\gamma_{X^1X^2-X^{1,m}X^{2,m}}(s\Delta) \to 0$ as $m\to \infty$ for each $s \in \mathbb{Z}$. By using the same type of bound as in (\ref{gammaDom}), we establish the existence of a function $h:\mathbb{Z}\to [0,\infty)$ in $\ell^1$ with $\vert \gamma_{X^1X^2-X^{1,m}X^{2,m}}(s\Delta)\vert \leq h(s)$ for all $s\in \mathbb{Z}$ and, consequently,
\begin{align*}
\MoveEqLeft\lim_{m \to \infty}\limsup_{n\to \infty} \mathbb{E}\big[\big(n^{-1/2}(S_n-\mathbb{E}S_n)- n^{-1/2}(S_n^m-\mathbb{E}S_n^m)\big)^2 \big] \\&= \lim_{m \to \infty}\sum_{s \in \mathbb{Z}}\gamma_{X^1X^2-X^{1,m}X^{2,m}}(s\Delta)  = 0
\end{align*}
according to Lebesgue's theorem. In light of (\ref{slutskyCond}), we have finished the proof. 
\end{proof}
Relying on the ideas of Young's convolution inequality, we obtain the following lemma:
\begin{lemma}\label{youngConvolution}
Let $\alpha,\beta,\gamma \in [1,\infty]$ satisfy $1/\alpha + 1/\beta-1 = 1/\gamma$. Suppose that 
\begin{align*}
\big(t \mapsto \big\lVert f(t+\cdot \Delta) \big\rVert_{\ell^\alpha}\big)\in L^{2\alpha}([0,\Delta])\quad \text{and} \quad \big(t\mapsto \big\lVert g(t+\cdot \Delta) \big\rVert_{\ell^\beta}\big)\in L^{2\beta}([0,\Delta]).
\end{align*}
Then it holds that $\int_\mathbb{R}\vert f(t)g(t+\cdot\Delta)\vert\, dt \in \ell^\gamma$.
\end{lemma}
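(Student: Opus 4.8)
The plan is to reduce the statement to the discrete version of Young's convolution inequality by folding $\mathbb{R}$ onto the fundamental domain $[0,\Delta]$. Write $c_s := \int_\mathbb{R} \vert f(t)\, g(t+s\Delta)\vert\, dt$ for $s\in\mathbb{Z}$, so that the assertion is $(c_s)_{s\in\mathbb{Z}} \in \ell^\gamma$. First I would split $\mathbb{R} = \bigcup_{r\in\mathbb{Z}}[r\Delta,(r+1)\Delta)$ and, on the $r$-th piece, substitute $t = u+r\Delta$ with $u\in[0,\Delta)$. Since all integrands are nonnegative, Tonelli's theorem gives
\begin{align*}
c_s = \int_0^\Delta \Bigl(\sum_{r\in\mathbb{Z}} \vert f(u+r\Delta)\vert\, \vert g(u+(r+s)\Delta)\vert \Bigr)\, du = \int_0^\Delta h_u(s)\, du,
\end{align*}
where for fixed $u\in[0,\Delta)$ I set $h_u(s) := \sum_{r\in\mathbb{Z}} a_u(r)\, b_u(r+s)$ with $a_u := (\vert f(u+r\Delta)\vert)_{r\in\mathbb{Z}}$ and $b_u := (\vert g(u+r\Delta)\vert)_{r\in\mathbb{Z}}$; note that $\lVert a_u\rVert_{\ell^\alpha} = \lVert f(u+\cdot\Delta)\rVert_{\ell^\alpha}$ and $\lVert b_u\rVert_{\ell^\beta} = \lVert g(u+\cdot\Delta)\rVert_{\ell^\beta}$.

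Next I would apply Minkowski's integral inequality for the $\ell^\gamma$-norm in the variable $s$ to pull the norm inside the integral over $u$, obtaining $\lVert (c_s)_{s}\rVert_{\ell^\gamma} \leq \int_0^\Delta \lVert h_u(\cdot)\rVert_{\ell^\gamma}\, du$. For each fixed $u$, the sequence $h_u$ is the cross-correlation of $a_u$ and $b_u$, hence an ordinary discrete convolution after reflecting one of the two sequences, an operation that leaves every $\ell^p$-norm unchanged. Therefore the discrete Young convolution inequality, valid for all $\alpha,\beta,\gamma\in[1,\infty]$ with $1/\alpha+1/\beta-1 = 1/\gamma$ (and reducing to H\"{o}lder's inequality in the endpoint cases $\alpha=\infty$ or $\gamma=\infty$), yields
\begin{align*}
\lVert h_u(\cdot)\rVert_{\ell^\gamma} \leq \lVert a_u\rVert_{\ell^\alpha}\, \lVert b_u\rVert_{\ell^\beta} = \lVert f(u+\cdot\Delta)\rVert_{\ell^\alpha}\, \lVert g(u+\cdot\Delta)\rVert_{\ell^\beta}
\end{align*}
for every $u\in[0,\Delta)$.

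Combining the two displays gives $\lVert (c_s)_s\rVert_{\ell^\gamma} \leq \int_0^\Delta \lVert f(u+\cdot\Delta)\rVert_{\ell^\alpha}\, \lVert g(u+\cdot\Delta)\rVert_{\ell^\beta}\, du$, and I would conclude with the Cauchy--Schwarz inequality on $[0,\Delta]$: since $\alpha,\beta\geq 1$ and $[0,\Delta]$ has finite Lebesgue measure, the hypotheses $\lVert f(\cdot+\cdot\Delta)\rVert_{\ell^\alpha}\in L^{2\alpha}([0,\Delta])$ and $\lVert g(\cdot+\cdot\Delta)\rVert_{\ell^\beta}\in L^{2\beta}([0,\Delta])$ imply in particular that both factors belong to $L^2([0,\Delta])$, so the integral is at most $\lVert\, \lVert f(\cdot+\cdot\Delta)\rVert_{\ell^\alpha}\, \rVert_{L^2([0,\Delta])}\, \lVert\, \lVert g(\cdot+\cdot\Delta)\rVert_{\ell^\beta}\, \rVert_{L^2([0,\Delta])} < \infty$, which is the claim. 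None of these steps is genuinely difficult; the only points requiring care are the fold-and-reindex computation in the first display (in particular keeping track of the shift $r \mapsto r+s$) and verifying that the exponent identity $1/\alpha+1/\beta-1=1/\gamma$ is precisely the hypothesis under which discrete Young applies, including the degenerate endpoint cases. Measurability of $u\mapsto h_u(s)$ and of $u\mapsto \lVert h_u(\cdot)\rVert_{\ell^\gamma}$ is automatic, as these are countable sums and suprema of measurable functions.
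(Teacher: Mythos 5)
Your proof is correct, and it takes a genuinely different route from the paper's. The paper reruns the classical three-exponent H\"{o}lder proof of Young's inequality directly on the integral $\int_\mathbb{R}\vert f(t)g(t+s\Delta)\vert\,dt$, which forces a case analysis ($\gamma=\infty$; $\alpha,\beta,\gamma>1$; $\gamma=\alpha\neq\beta$; $\alpha=\beta=\gamma=1$), produces a constant $M$ depending on $\lVert f\rVert_{L^\alpha}$ and $\lVert g\rVert_{L^\beta}$, raises to the $\gamma$-th power, sums over $s$, and only then folds onto $[0,\Delta]$ to apply Cauchy--Schwarz to $\lVert f(t+\cdot\Delta)\rVert_{\ell^\alpha}^\alpha$ and $\lVert g(t+\cdot\Delta)\rVert_{\ell^\beta}^\beta$ -- this is where the $L^{2\alpha}$ and $L^{2\beta}$ hypotheses enter. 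You instead fold first, recognize $c_s$ as an integral over the fiber $[0,\Delta)$ of discrete cross-correlations, and glue the fiberwise discrete Young inequality with Minkowski's integral inequality for the $\ell^\gamma$-valued integral. This buys you three things: all exponent configurations, including the endpoints, are handled uniformly by the discrete Young inequality; the resulting bound $\lVert (c_s)_s\rVert_{\ell^\gamma}\leq\int_0^\Delta\lVert f(u+\cdot\Delta)\rVert_{\ell^\alpha}\lVert g(u+\cdot\Delta)\rVert_{\ell^\beta}\,du$ is homogeneous of degree one in each function, with no extraneous constant; and your final step needs only that the two fiber norms lie in $L^2([0,\Delta])$, which is strictly weaker than the stated $L^{2\alpha}$ and $L^{2\beta}$ hypotheses (though on the finite interval the latter imply the former, so the lemma as stated is recovered). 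The only ingredient you use beyond the paper's toolkit is Minkowski's integral inequality for $\ell^\gamma$ (valid for all $\gamma\in[1,\infty]$ with the joint measurability you note), which is standard; all other steps (Tonelli for the fold, reflection invariance of $\ell^p$-norms, Cauchy--Schwarz on the finite interval) check out.
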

\begin{proof}
First observe that, for any measurable function $h:\mathbb{R}\to \mathbb{R}$ and $p\in [1,\infty]$, $h\in L^p$ if and only if $t\mapsto \big\lVert h(t + \cdot\Delta)\big\rVert_{\ell^p}$ belongs to $L^p([0,\Delta])$. In particular, this ensures that $f\in L^\alpha$ and $g \in L^\beta$. If $\gamma=\infty$ then $1/\alpha+1/\beta  = 1$, and the result follows immediately from H\"{o}lder's inequality. Hence, we will restrict the attention to $\gamma< \infty$, in which case we necessarily also have that $\alpha,\beta<\infty$. First, consider the case where $\alpha,\beta\neq \gamma$, or equivalently $\alpha,\beta,\gamma>1$, and set $\alpha' = \alpha/(\alpha-1)$ and $\beta' = \beta/(\beta-1)$. Note that these definitions ensure that $\alpha' (1-\beta/\gamma) = \beta$, $\beta'(1-\alpha/\gamma)=\alpha$ and $1/\alpha' + 1/\beta'+1/\gamma = 1$. Hence, using the H\"{o}lder inequality and the facts that $f \in L^\alpha$ and $g \in L^\beta$,
\begin{align*}
\int_\mathbb{R}\vert f(t)g(t+s\Delta)\vert\, dt \leq &\, \Bigr( \int_\mathbb{R}\vert f(t)\vert^\alpha \vert g(t+s\Delta)\vert^\beta\, dt\Bigr)^{1/\gamma}\cdot \Bigr(\int_\mathbb{R}\vert f(t)\vert^{\beta' (1-\alpha/\gamma)}\, dt \Bigr)^{1/\beta'}\\
&\cdot \Bigr(\int_\mathbb{R} \vert g (t+s\Delta)\vert^{\alpha'(1-\beta/\gamma)}\, dt \Bigr)^{1/\alpha'}\\
=&\, M^{1/\gamma} \Bigr(\int_\mathbb{R}\vert f(t)\vert^\alpha \vert g(t+s\Delta)\vert^\beta\, dt\Bigr)^{1/\gamma}
\end{align*}
for a suitable constant $M< \infty$. By raising both sides to the $\gamma$-th power, summing over $s\in \mathbb{Z}$ and applying the Cauchy-Schwarz inequality we obtain that
\begin{align}\label{summingIneq}
\begin{aligned}
\Bigr\lVert\int_\mathbb{R}\vert f(t)g(t+\cdot\Delta)\vert\, dt \Bigr\rVert^\gamma_{\ell^\gamma} &\leq M \int_\mathbb{R} \vert f(t)\vert^\alpha  \lVert g(t+\cdot\Delta)\rVert_{\ell^\beta}^\beta\, dt\\
&\leq M \Bigr(\int_0^\Delta\lVert f(t+\cdot\Delta)\rVert^{2\alpha}_{\ell^\alpha}\, dt\Bigr)^{1/2}\Bigr(\int_0^\Delta\lVert g(t+\cdot\Delta)\rVert_{\ell^\beta}^{2\beta}\, dt\Bigr)^{1/2},
\end{aligned}
\end{align}
which is finite, and thus we have finished the proof in case $\alpha,\beta\neq \gamma$. If, e.g., $\gamma=\alpha\neq \beta$ then $\alpha>1$. Again, set $\alpha'=\alpha/(\alpha-1)$ and note that $1/\alpha' +1/\gamma =1$, so the H\"{o}lder inequality ensures that
\begin{align*}
\int_\mathbb{R}\vert f(t)g(t+s\Delta)\vert\, dt&\leq \Bigr(\int_\mathbb{R}\vert f(t)\vert^\alpha \vert g(t+s\Delta)\vert^\beta\, dt \Bigr)^{1/\gamma}\cdot \Bigr(\int_\mathbb{R}\vert g(t)\vert^\beta\, dt \Bigr)^{1/\alpha'},
\end{align*}
and hence the inequalities in (\ref{summingIneq}) hold in this case as well for a suitable constant $M>0$. Finally if $\alpha=\beta=\gamma=1$, we compute that
\begin{align*}
\Bigr\lVert \int_\mathbb{R}\vert f(t)g(t+\cdot \Delta)\vert\, dt \Bigr\rVert_{\ell^1} &=\int_0^\Delta \lVert f(t+\cdot \Delta) \rVert_{\ell^1}\lVert g(t+\cdot \Delta) \rVert_{\ell^1}\, dt\\
&\leq (\int_0^\Delta\lVert f(t+\cdot\Delta)\rVert^{2}_{\ell^1}\, dt\Bigr)^{1/2}\Bigr(\int_0^\Delta\lVert g(t+\cdot\Delta)\rVert_{\ell^1}^{2}\, dt\Bigr)^{1/2}<\infty,
\end{align*}
which finishes the proof.
\end{proof}

\begin{proof}[Proof of Theorem~\ref{theoremIntro2}]
To show that statement (\ref{introAs12}) implies the stated weak convergence of $(S_n-\mathbb{E}S_n)/\sqrt{n}$, it suffices to check that assumptions~(\ref{firstAs1})-(\ref{firstAs2}) of Theorem~\ref{Qform} are satisfied. Initially note that, in view of the observation in the beginning of the proof of Lemma~\ref{youngConvolution}, the imposed assumptions imply that
\begin{align*}
\varphi_i \in L^\beta\quad \text{and} \quad \big( t \mapsto \lVert\varphi_i (t+\cdot\Delta)\rVert_{\ell^\beta}\big) \in L^{2\beta}([0,\Delta])
\end{align*}
for all $\beta \in [\alpha_i,2]$. Since
\begin{align*}
\tfrac{1}{2}\in \big\{\tfrac{1}{\beta_1} + \tfrac{1}{\beta_2}-1\, :\, \alpha_i\leq \beta_i \leq 2 \big\},
\end{align*}
we can thus assume that $\alpha_1,\alpha_2 \in [1,2]$ are given such that $1/\alpha_1+1/\alpha_2 -1= 1/2$. Next, define $\gamma_i$ by the relation $1/\gamma_i = 2/\alpha_i-1$ if $\alpha_i<2$ and $\gamma_i = \infty$ if $\alpha_i = 2$. In this case, $1/\gamma_1 + 1/\gamma_2 = 1$. By applying Lemma~\ref{youngConvolution} with $f =g= \varphi_i$, $\alpha=\beta=\alpha_i$ and $\gamma=\gamma_i$, we deduce that (\ref{firstAs1}) of Theorem~\ref{Qform} holds. Assumption~(\ref{firstAs3}) of Theorem~\ref{Qform} holds as well by Lemma~\ref{youngConvolution} with $f  = \varphi_1$, $g = \varphi_2$, $\alpha=\alpha_1$, $\beta = \alpha_2$ and $\gamma=2$. Finally, we have that assumption~(\ref{firstAs2}) of Theorem~\ref{Qform} is satisfied, since
\begin{align*}
\MoveEqLeft\int_0^\Delta\lVert \varphi_1 (t+\cdot\Delta) \varphi_2 (t+\cdot\Delta)\rVert_{\ell^1}^2\, dt\\
&\leq \Bigr(\int_0^\Delta \lVert\varphi_1 (t+\cdot\Delta)\rVert_{\ell^2}^4\, dt \Bigr)^{1/2}\Bigr(\int_0^\Delta \lVert\varphi_2 (t+\cdot\Delta)\rVert_{\ell^2}^4\, dt \Bigr)^{1/2}<\infty,
\end{align*}
where we have applied the Cauchy-Schwarz inequality both for sums and integrals.

The last part of the proof (concerning statement (\ref{introAs22}) in the theorem) amounts to show that if $\varphi_1,\varphi_2\in L^4$ and $\alpha_1,\alpha_2 \in (1/2,1)$ are given such that $\alpha_1 + \alpha_2 > 3/2$ and
\begin{align}\label{functionDecay}
c_i:=\sup_{t\in \mathbb{R}} \vert t \vert^{\alpha_i}\vert \varphi_i (t)\vert <\infty,\quad i=1,2,
\end{align}
then $t\mapsto \lVert \varphi_i (t+\cdot \Delta) \rVert_{\ell^\kappa}^{\kappa}$ belongs to $L^2([0,\Delta])$ for $\kappa \in \{\beta_i,2\}$ where $\beta_i\in (1/\alpha_i,2]$, $i=1,2$, are given such that $1/\beta_1 + 1/\beta_2 \geq 3/2$. To show this, consider $\kappa \in \{\beta_i,2\}$ and write
\begin{align}\label{sumDec}
\begin{aligned}
\lVert \varphi_i (t+\cdot \Delta)\rVert_{\ell^\kappa}^\kappa =&\, \vert \varphi_i(t+\Delta)\vert^{\kappa}+ \vert \varphi_i(t)\vert^{\kappa} +\vert \varphi_i(t-\Delta)\vert^{\kappa} \\
& + \sum_{s=2}^\infty \vert \varphi_i (t+s\Delta)\vert^\kappa + \sum_{s=2}^{\infty} \vert \varphi_i (t-s\Delta)\vert^\kappa
\end{aligned}
\end{align}
for $t\in [0,\Delta]$. Since $\varphi_i\in L^4$, the first three terms on the right-hand side of (\ref{sumDec}) belong to $L^2([0,\Delta])$. The last two terms belong to $L^\infty ([0,\Delta])$, since
\begin{align*}
\sup_{t\in [0,\Delta]}\sum_{s=2}^\infty \vert \varphi_i (t\pm s\Delta)\vert^\kappa \leq c_i^\kappa \Delta^{-\kappa\alpha_i}\sum_{s=1}^\infty s^{-\kappa\alpha_i}<\infty
\end{align*}
by (\ref{functionDecay}), and hence $\big(t\mapsto \lVert \varphi_i(t+\cdot \Delta) \rVert_{\ell^\kappa}^\kappa\big)\in L^2([0,\Delta])$.
\end{proof}

\begin{proof}[Proof of Theorem~\ref{quadraticForms}]
Initially, we note that
\begin{align}\label{QnDecompose}
Q_n = \sum_{t=1}^n X_{t\Delta}\int_\mathbb{R}\sum_{s=t-n}^{t-1}b(s)\varphi ((t-s)\Delta -u)\, dL_u = S_n - \varepsilon_n -\delta_n,
\end{align}
where
\begin{align*}
S_n &= \sum_{t = 1}^n X_{t\Delta}\int_\mathbb{R}b\star \varphi (t\Delta -u)\, dL_u,\\
\varepsilon_n &= \sum_{t = 1}^n X_{t\Delta}\int_\mathbb{R}\sum_{s =t}^{\infty} b(s)\varphi ((t-s)\Delta-u)\, dL_u\quad \text{and} \\
\delta_n &= \sum_{t = 1}^n X_{t\Delta}\int_\mathbb{R}\sum_{s=-\infty}^{t-n-1}b(s)\varphi ((t-s)\Delta -u)\, dL_u.
\end{align*}
As pointed out in Remark~\ref{QnApprox}, the imposed assumptions ensure that Theorem~\ref{Qform} is applicable with $\varphi_1 = \varphi$ and $\varphi_2 = \vert b\vert \star \vert\varphi\vert$ (in particular, when $\varphi_2 = b\star \varphi$), and thus $(S_n-\mathbb{E}S_n)/\sqrt{n}\overset{d}{\to}N(0,\eta^2)$ where $\eta^2$ is given by (\ref{SnVariance}). By using that $b$ is even we compute
\begin{align*}
\MoveEqLeft\sigma^2\sum_{s\in \mathbb{Z}}\gamma_X(s\Delta)\int_\mathbb{R} (b\star \varphi) (t) (b\star \varphi)(t+s\Delta)\, dt\\
&= \sum_{s\in \mathbb{Z}} \sum_{u,v\in \mathbb{Z}} b(u)b(v)\gamma_X((s+u)\Delta)\gamma_X((s+v)\Delta) = \lVert (b \star \gamma_X)(\cdot \Delta) \rVert_{\ell^2}^2
\end{align*}
and
\begin{align*}
\MoveEqLeft \sigma^4\sum_{s\in \mathbb{Z}} \int_\mathbb{R} \varphi (t) (b\star \varphi) (t+s\Delta)\, dt\cdot \int_\mathbb{R} (b\star \varphi)(t)\varphi (t+s\Delta)\, dt\\
&= \sum_{s\in \mathbb{Z}}\sum_{u,v\in \mathbb{Z}}b(u)b(v) \gamma_X ((s-u)\Delta)\gamma_X ((s+v)\Delta) = \lVert (b\star \gamma_X) (\cdot \Delta) \rVert_{\ell^2}^2,
\end{align*}
it follows that $\eta^2$ coincides with (\ref{quadraticFormVariance}).
In light of the decomposition (\ref{QnDecompose}) and Slutsky's theorem, we have shown the result if we can argue that $\text{Var}(\varepsilon_n)/n \to 0$ and $\text{Var}(\delta_n)/n \to 0$ as $n\to \infty$. We will only show that $\text{Var}(\varepsilon_n)/n \to 0$, since arguments verifying that $\text{Var}(\delta_n)/n\to 0$ are similar. Define $a (t)= \int_\mathbb{R}\varphi (s)\varphi (t\Delta +s)\, ds$ and note that we have the identities
\begin{align*}
\mathbb{E}\varepsilon_n &= \sigma^2\sum_{t=1}^n \sum_{s=-\infty}^0 a(t-s)b(t-s)\quad \text{and} \\
\mathbb{E}\varepsilon_n^2 &= \sum_{t,s=1}^n \sum_{u =t}^\infty \sum_{v =s}^\infty b(u)b(v) \mathbb{E}[X_{t\Delta}X_{s\Delta}X_{(t-u)\Delta} X_{(s-v)\Delta}]\\
&=\sum_{t,s,u,v\in \mathbb{Z}} b(t-u)b(s-v) \mathbb{E}[X_{t\Delta}X_{s\Delta}X_{u\Delta}X_{v\Delta}] \mathds{1}_{\{1\leq t,s\leq n\}}\mathds{1}_{\{u,v\leq 0\}}.
\end{align*}
Moreover, with
\begin{align*}
c(t,s,u) =\int_\mathbb{R}\varphi (t\Delta + v)\varphi (s\Delta + v)\varphi (u\Delta + v)\varphi (v)\, dv,
\end{align*}
it follows by Lemma~\ref{fourthMoment} that
\begin{align*}
\mathbb{E}[X_{t\Delta}X_{s\Delta}X_{u\Delta}X_{v\Delta}] =&\, \kappa_4 c(t-v,s-v,u-v) + \sigma^4 a(t-s)a(u-v) \\
&+ \sigma^4 a(t-u)a(s-v) + \sigma^4 a(t-v)a(s-u)
\end{align*}
for any $t,s,u,v\in \mathbb{Z}$. Thus, we establish the identity
\begin{align}\label{keyEquation}
\begin{aligned}
n^{-1}\text{Var}(\varepsilon_n) =&\, \kappa_4 n^{-1} \sum_{t,s,u,v\in \mathbb{Z}} b(t-u)b(s-v)c(t-v,s-v,u-v)\mathds{1}_{\{1\leq t,s\leq n\}}\mathds{1}_{\{u,v\leq 0\}}\\
&+\sigma^4n^{-1}\sum_{t,s,u,v\in \mathbb{Z}} a(t-s)a(u-v)b(t-u)b(s-v)\mathds{1}_{\{1\leq t,s\leq n\}}\mathds{1}_{\{u,v\leq 0\}} \\
&+\sigma^4n^{-1}\sum_{t,s,u,v\in \mathbb{Z}} a(t-v)a(s-u)b(t-u)b(s-v)\mathds{1}_{\{1\leq t,s\leq n\}}\mathds{1}_{\{u,v\leq 0\}}.
\end{aligned}
\end{align}
Thus, it suffices to argue that each of the three terms on the right-hand side of (\ref{keyEquation}) tends to zero as $n$ tends to infinity. Regarding the first term, by a change of variables from $(t,s,u,v)$ to $(t-v,s-u,u-v,v)$, we have
\begin{align}\label{term1}
\begin{aligned}
\MoveEqLeft n^{-1}\sum_{t,s,u,v\in \mathbb{Z}} b(t-u)b(s-v)c(t-v,s-v,u-v)\mathds{1}_{\{1\leq t,s\leq n\}}\mathds{1}_{\{u,v\leq 0\}}\\
&= \sum_{t,s,u\in \mathbb{Z}}b(t-u)b(s+u)c(t,s+u,u) n^{-1}\sum_{v \in \mathbb{Z}}\mathds{1}_{\{1\leq t+v,s+u+v\leq n\}}\mathds{1}_{\{u+v,v\leq 0\}}.
\end{aligned}
\end{align}
Since for fixed $t,s,u\in \mathbb{Z}$,
\begin{align*}
\sum_{v \in \mathbb{Z}}\mathds{1}_{\{1\leq t+v,s+u+v\leq n\}}\mathds{1}_{\{u+v,v\leq 0\}} \leq \min\{\vert t \vert,n\},
\end{align*}
it will follow that the expression in (\ref{term1}) tends to zero as $n$ tends to infinity by Lebesgue's theorem on dominated convergence if
\begin{align}\label{integrability1}
\sum_{t,s,u \in \mathbb{Z}}\vert b(t)b(s)c(t+u,s,u)\vert < \infty.
\end{align}
To show (\ref{integrability1}) we use that the function $t\mapsto \lVert \varphi (t+\cdot \Delta) (\vert b \vert \star \vert \varphi \vert)(t+\cdot \Delta) \rVert_{\ell^1}$ belongs to $L^2([0,\Delta])$ (by assumption~(\ref{as3})) and is periodic with period $\Delta$:
\begin{align*}
\MoveEqLeft\sum_{t,s,u \in \mathbb{Z}} \vert b(t) b(s)c(t+u,s,u)\vert \\
&\leq \sum_{u\in \mathbb{Z}}\int_\mathbb{R}\vert \varphi (v)\vert\, (\vert b\vert\star \vert \varphi \vert)(v)\, \vert \varphi (v+u\Delta)\vert\, (\vert b\vert\star \vert \varphi \vert)(v+u\Delta)\, dv
\\
&= \int_0^\Delta \lVert \varphi (v+\cdot \Delta) (\vert b \vert \star \vert \varphi \vert)(v+\cdot \Delta) \rVert_{\ell^1}^2 \, dv<\infty.
\end{align*}
Hence, (\ref{term1}) tends to zero. We will handle the second term on the right-hand side of (\ref{keyEquation}) in a similar way. In particular, by a change of variables from $(t,s,u,v)$ to $(t,t-s,s-u,t-v)$,
\begin{align}\label{term2}
\begin{aligned}
\MoveEqLeft n^{-1}\sum_{t,s,u,v\in \mathbb{Z}} a(t-s)a(u-v)b(t-u)b(s-v)\mathds{1}_{\{1\leq t,s\leq n\}}\mathds{1}_{\{u,v\leq 0\}}\\
&= \sum_{s,u,v\in \mathbb{Z}}a(s) a(v-u-s) b(s+u)b(v-s) n^{-1}\sum_{t\in \mathbb{Z}} \mathds{1}_{\{1\leq t, t-s\leq n\}} \mathds{1}_{\{t-s-u,t-v\leq 0\}}.
\end{aligned}
\end{align}
For fixed $s,u,v \in \mathbb{Z}$,
\begin{align*}
\sum_{t\in \mathbb{Z}} \mathds{1}_{\{1\leq t,t-s\leq n\}} \mathds{1}_{\{t-s-u,t-v\leq 0\}} \leq \min \{\vert v \vert,n\},
\end{align*}
and since
\begin{align}\label{dominatedConv23}
\begin{aligned}
\MoveEqLeft\sum_{s,u,v\in \mathbb{Z}} \vert a(s) a(v-u-s) b(s+u)b(v-s)\vert\\
&\leq \lVert a\rVert_{\ell^\alpha} \Bigr\lVert\sum_{u,v\in \mathbb{Z}}\vert a(v-u-\cdot) b(\cdot+u)b(v-\cdot)\vert \Bigr\rVert_{\ell^\beta}
\\
&\leq \Bigr\lVert \int_\mathbb{R}\vert \varphi (u)\varphi (u+\cdot \Delta)\vert\, du\Bigr\rVert_{\ell^\alpha}  \Bigr\lVert \int_{\mathbb{R}}(\vert b \vert \star \vert \varphi \vert)(u)\, (\vert b \vert \star \vert \varphi \vert)(u+\cdot \Delta)\, du \Bigr\rVert_{\ell^\beta}<\infty
\end{aligned}
\end{align}
by assumption~(\ref{as1}), it follows again by dominated convergence that (\ref{term2}) tends to zero as $n$ tends to infinity. Finally, for the third term on the right-hand side of (\ref{keyEquation}), we make a change of variables from $(t,s,u,v)$ to $(t-u,s-t,u-v,v)$ and establish the inequality
\begin{align}\label{term3}
\begin{aligned}
\MoveEqLeft n^{-1}\sum_{t,s,u,v\in \mathbb{Z}} a(t-v)a(s-u)b(t-u)b(s-v)\mathds{1}_{\{1\leq t,s\leq n\}}\mathds{1}_{\{u,v\leq 0\}}\\
&\leq \sum_{t,s,u\in \mathbb{Z}} a(t+u)a(t+s)b(t)b(t+s+u)n^{-1}\min \{\vert t+u\vert,n\}.
\end{aligned}
\end{align}
The right-hand side of (\ref{term3}) tends to zero as $n$ tends to infinity by dominated convergence using (\ref{dominatedConv23}) and that $a$ is even. Consequently, (\ref{keyEquation}) shows that $\text{Var}(\varepsilon_n)/n \to 0$ as $n \to \infty$, which ends the proof.
\end{proof}

\begin{proof}[Proof of Theorem~\ref{theoremIntro}]
To show (\ref{introAs1}), define $\gamma\in [1,2]$ by the relation $1/\gamma=1/\alpha + 1/\beta -1$ and note that $1/\alpha+ 1/\gamma \geq 3/2$. According to Remark~\ref{QnApprox} it suffices to check that the assumptions of Theorem~\ref{Qform} are satisfied for the functions $\varphi$ and $\vert b \vert \star \vert \varphi \vert$, which in turn follows from the same arguments as in the proof of Theorem~\ref{theoremIntro2} if
\begin{align}
\big(t\mapsto \lVert  \varphi (t+\cdot \Delta) \rVert_{\ell^\alpha}^\alpha + \lVert  \varphi (t+\cdot \Delta) \rVert_{\ell^2}^2 \big)&\in L^2([0,\Delta])\quad \text{and}\label{QnCorTarget}\\
\big(t\mapsto \lVert  (\vert b\vert \star \vert \varphi \vert) (t+\cdot \Delta) \rVert_{\ell^\gamma}^\gamma + \lVert  (\vert b\vert \star \vert \varphi \vert) (t+\cdot \Delta) \rVert_{\ell^2}^2 \big)&\in L^2([0,\Delta]).\label{QnCorTarget2}
\end{align}
Condition (\ref{QnCorTarget}) holds by assumption (since $\alpha \leq 2$), so we only need to prove (\ref{QnCorTarget2}). If $\beta =1$ so that $b$ is summable, it follows from Jensen's inequality that
\begin{align*}
\big(\vert b \vert \star \vert \varphi\vert \big)(t)^\kappa \leq \lVert b \rVert_{\ell^1}^{\kappa-1} \sum_{s\in \mathbb{Z}}\vert b (s)\vert\, \vert\varphi(t+s\Delta)\vert^\kappa,
\end{align*}
and thus $\lVert (\vert b \vert \star \vert \varphi \vert)(t+\cdot \Delta) \rVert_{\ell^\kappa}^\kappa\leq \lVert b \rVert_{\ell^1}^\kappa \lVert\varphi (t+\cdot \Delta) \rVert_{\ell^\kappa}^\kappa$ for any $\kappa \geq 1$. Since $\alpha = \gamma$ when $\beta =1$, this shows that (\ref{QnCorTarget}) implies (\ref{QnCorTarget2}). Next if $\beta >1$, set $\beta'=\beta /(\beta-1)$. As in the proof of Lemma~\ref{youngConvolution} (replacing integrals by sums), we can use the H\"{o}lder inequality to obtain the estimate
\begin{align*}
(\vert b \vert \star \vert \varphi \vert)(t)\leq M^{1/\gamma}
\Bigr(\sum_{s\in \mathbb{Z}} \vert \varphi (t+s\Delta)\vert^\alpha \Bigr)^{1/\beta'}\Bigr(\sum_{s\in \mathbb{Z}}\vert b(s)\vert^\beta \vert \varphi (t+s\Delta)\vert^\alpha \Bigr)^{1/\gamma}
\end{align*}
for some constant $M>0$. By raising both sides to the $\gamma$-th power and exploiting the periodicity of $t\mapsto \lVert \varphi (t+\cdot \Delta) \rVert_{\ell^\alpha}^\alpha$, it follows that
\begin{align}\label{keySufficientRelation}
\begin{aligned}
\lVert (\vert b \vert \star \vert \varphi \vert)(t+\cdot \Delta) \rVert_{\ell^\gamma}^\gamma &\leq M\Bigr(\sum_{s\in \mathbb{Z}}\vert \varphi (t+s\Delta)\vert^\alpha  \Bigr)^{\gamma/\beta'}\sum_{s\in \mathbb{Z}}\vert b(s) \vert^\beta \sum_{u\in \mathbb{Z}}\vert\varphi (t+(s+u)\Delta)\vert^\alpha\\
&= M\lVert b \rVert_{\ell^\beta}^\beta \lVert\varphi (t+\cdot\Delta)\rVert^{\gamma}_{\ell^\alpha}
\end{aligned}
\end{align}
for a sufficiently large constant $M>0$. Since $\gamma \leq 2$, (\ref{keySufficientRelation}) and the assumption $\big(t\mapsto \lVert\varphi (t+\cdot \Delta) \rVert_{\ell^\alpha}^\alpha \big)\in L^{4/\alpha}([0,\Delta])$ show that $\big(t\mapsto \lVert (\vert b \vert \star \vert \varphi \vert)(t+\cdot \Delta) \rVert_{\ell^\gamma}^\gamma \big) \in L^2([0,\Delta])$. To show $t\mapsto \lVert (\vert b \vert \star \vert \varphi \vert)(t+\cdot \Delta) \rVert_{\ell^2}^2\in L^2([0,\Delta])$, we note that the assumption $2/\alpha + 1/\beta \geq 5/2$ ensures that we may choose $\beta^* \in [\beta,2]$ such that $1/\alpha + 1/\beta^* = 3/2$. Using the same type of arguments as above, now with $\alpha$, $\beta^*$ and $\gamma^*=2$ instead of $\alpha$, $\beta$ and $\gamma$, we obtain the inequality
\begin{align*}
\lVert (\vert b \vert \star \vert \varphi \vert)(t+\cdot \Delta) \rVert_{\ell^2}^2 \leq M\lVert b \rVert_{\ell^{\beta^*}}^{\beta^*} \lVert\varphi (t+\cdot\Delta)\rVert^{2}_{\ell^{\alpha}}.
\end{align*}
Due to the fact that $(t\mapsto \rVert \varphi (t+ \cdot \Delta) \lVert_{\ell^{\alpha}}^{\alpha})\in L^{4/\alpha}([0,\Delta])$, this shows that $\big(t\mapsto \lVert (\vert b \vert \star \vert \varphi \vert)(t+\cdot \Delta) \rVert_{\ell^2}^2\big)\in L^2([0,\Delta])$ and, thus, ends the proof under statement (\ref{introAs1}).

In view of the above, to show the last part of the theorem (concerning statement (\ref{introAs2})), it suffices to argue that if $\varphi \in L^4$,
\begin{align*}
c_1 := \sup_{t\in \mathbb{R}} \vert t \vert^{1-\alpha/2}  \vert \varphi (t)\vert<\infty \quad \text{and}\quad c_2 := \sup_{t\in \mathbb{Z}}\vert t \vert^{1-\beta} \vert b (t)\vert<\infty
\end{align*}
for some $\alpha,\beta >0$ with $\alpha + \beta <1/2$, then there exist $p,q\in [1,2]$ such that $2/p + 1/q\geq 5/2$, $b\in \ell^q$ and $\big(t\mapsto \lVert \varphi (t+\cdot \Delta) \rVert_{\ell^\kappa}\big)\in L^4([0,\Delta])$ for $\kappa \in \{p,2\}$. To do so observe that
\begin{align*}
\tfrac{5}{2}\in \big\{\tfrac{2}{p}+\tfrac{1}{q}\, :\, \tfrac{2}{2-\alpha}<p\leq 2,\, \tfrac{1}{1-\beta}<q\leq 2\big\},
\end{align*}
and hence we may (and do) fix $p,q \in [1,2]$ such that $2/p + 1/q \geq 5/2$, $p (\alpha/2-1)<-1$ and $q(\beta -1)<-1$. With this choice it holds that $b\in \ell^q$, since
\begin{align*}
\lVert b \rVert_{\ell^q}^q \leq \vert b (0) \vert^q + 2c_2^q \sum_{s=1}^\infty s^{q(\beta-1)} <\infty.
\end{align*}
We can use the same type of arguments as in the last part of the proof of Theorem~\ref{theoremIntro2} to conclude that $\big(t\mapsto \lVert \varphi (t+\cdot \Delta) \rVert_{\ell^\kappa}^\kappa\big)\in L^{4/\kappa}([0,\Delta])$ for $\kappa \in \{p,2\}$. Indeed, in view of the decomposition (\ref{sumDec}) (with $\varphi$ playing the role of $\varphi_i$) and the fact that $\varphi \in L^4$, it suffices to argue that $\sup_{t\in [0,\Delta]}\sum_{s=2}^\infty \vert \varphi (t\pm s\Delta)\vert^\kappa<\infty$. However, this is clearly the case as $\kappa (\alpha/2-1)\leq p(\alpha/2-1)<-1$ and, thus,
\begin{align*}
\sup_{t\in [0,\Delta]}\sum_{s=2}^\infty \vert \varphi (t+s\Delta)\vert^\kappa \leq c_1^\kappa \Delta^{\kappa (\alpha/2-1)}\sum_{s=1}^\infty s^{\kappa (\alpha/2-1)}<\infty.
\end{align*}
This ends the proof of the result.
\end{proof}
\subsection*{Acknowledgments}
The research was supported by Danish Council for Independent Research grant DFF-4002-00003. 

\bibliographystyle{chicago}

\end{document}